\def\cP{\mathcal P}
\def\cX{\mathcal X}
\newtheorem{thm}{Theorem} 
\newtheorem*{thm*}{Theorem}
\newtheorem*{prop*}{Proposition}
\newtheorem{cor}[thm]{Corollary}
\newtheorem*{cor*}{Corollary}
\newtheorem{lem}[thm]{Lemma}
\newtheorem*{lem*}{Lemma}
\newtheorem*{claim*}{Claim}
\newtheorem{prop}[thm]{Proposition}
\newtheorem{defn}[thm]{Definition}
\theoremstyle{remark}
\newtheorem{rem}[thm]{Remark}
\newtheorem*{rem*}{Remark}
\newtheorem{crit-rem}[thm]{Critical remark}
\newtheorem{example}[thm]{Example}
\newtheorem*{example*}{Example}
\newtheorem*{defn*}{Definition}
\newtheorem*{con*}{Conjecture}
\def\inv{^{-1}}
\def\gG{\mathbb G}
\def\refp #1.{(\ref{#1})}
\newcommand{\A}{\mathcal{A}}
\newcommand{\ul}[1]{\underline {#1}}
\def\sbr #1.{^{[#1]}}
\def\sfl #1.{^{\lfloor #1\rfloor}}
\def\inv{^{-1}}
\def\?{{\bf{??}}}
\def\A{\Bbb A}
\def\C{\mathbb C}
\def\P{\mathbb P}
\def\sym{\text{\rm Sym} }
\def\O{\mathcal O}
\def\Sym{\textrm{Sym}}
\def\rk{\text{rk}}
\def\g{\mathfrak g}
\def\1/2{\frac{1}{2}}
\def\I{\mathcal{ I}}
\def\2{{[2]}}
\def\l{\ell}
\def\nl{\newline}
\def\<{\langle}
\def\>{\rangle}
\def\2{{[2]}}
\def\l{\ell}
\def\scl #1.{^{\lceil#1\rceil}}
\def\spr #1.{^{(#1)}}
\def\sbc #1.{^{\{#1\}}}
\def\subpr#1.{_{(#1)}}
\def\beq{\begin{equation*}}
\def\eeq{\end{equation*}}
\def\g3{{\Gamma\spr 3.}}
\newcommand{\eqspl}[2]{
\begin{equation}\label{#1}
\begin{split}
#2\end{split}\end{equation}}
\newcommand{\exseq}[3]{
0\to #1\to #2\to #3\to 0
}
\newcommand{\beginalphaenum}{
\begin{enumerate}\renewcommand{\labelenumi}{ }
\item \begin{enumerate}
}
\def\eex{\end{rm}\end{example}}
\begin{document} 

\title{Regular and rigid curves on some Calabi-Yau and general-type
	complete intersections }
\author 
{Ziv Ran}


\thanks{arxiv.org 2207.01096   }
\date {\DTMnow}


\address {\nl UC Math Dept. \nl
Skye Surge Facility, Aberdeen-Inverness Road
\nl
Riverside CA 92521 US\nl 
ziv.ran @  ucr.edu\nl
\url{https://profiles.ucr.edu/app/home/profile/zivran}
}

 \subjclass[2010]{14j30, 14j32, 14j60, 14j70}
\keywords{Calabi-Yau complete intersection, rigid curve, isolated curve, degeneration methods}

\begin{abstract}
	Let $X$ be either a
	general hypersurface of degree $n+1$ in $\P^n$ or a general $(2,n)$ complete intersection
	in $\P^{n+1}, n\geq 4$. We construct balanced rational curves on $X$ of all high enough degrees.
	If $n=4$ or $g=1$, 
	we construct rigid curves of genus $g$ on $X$  of all high enough  degrees.
	As an application we construct some rigid bundles on Calabi-Yau threefolds.
	In addition, we construct some low-degree balanced rational curves on hypersurfaces
	of degree $n+2$ in $\P^n$.
\end{abstract}
\maketitle
{\tt{
\section*{Declarartiions}
{\bf{Ethics approval and consent to participate}}
\par

Not applicable.\par
{\bf{Consent for publication}}\par
Consent to publish this article is granted to IJM
\par{\bf{Availability of data and materials}}
\par There are no data or materials associated with this paper.\par
{\bf{Competing interests}}\par
There are no competing interests\par
{\bf{Funding}}\par
No funding was received\par
{\bf{Authors' contributions}}\par
Ziv Ran is the sole author and contributed 100\%
\par
{\bf{Acknowledgments}}\par
I thank the Referee for numerous comments and corrections which have led 
to much improvement in the paper.
\section*{Conflict of Interest Statement}

This manuscript has not been submitted to, nor is under review at, another journal or other publishing venue.

\par	  The author has no affiliation with any organization with a direct or indirect 
financial interest in the subject matter discussed in the manuscript
\section*{Data Availability Statement}
There is no data set associated with this paper
}}
\normalsize
\section*{Introduction}

\subsection*{Setup} Let $C$ be a curve of genus $g$ on a variety $X$ of dimension $m\geq 3$
whose canonical bundle $K_X$ is ample or trivial.
Deformations of $C$ in $X$ (and sometimes with $X$) are controlled by the normal bundle $N=N_{C/X}$.
In particular, $C$ is said to be \emph{rigid}
(or sometimes \emph{isolated}) if  $H^0(N)=0$, i.e $C$ does not move on $X$, even infinitesimally. 
$C$ is \emph{regular} or \emph{strongly unobstructed} if $H^1(N)=0$
which means that deformations of $C$ in $X$ are unobstructed and not oversize.
Now in general one has
\[\chi(N)=C.(-K_X)+(m-3)(1-g).\]
This number is always $<0$ if $K_X$ is ample and $g>0$.
On the other hand, when $X$ is a Calabi-Yau ($K_X$ trivial) and either $m=3$ or
$g=1$, one always has
$\chi(N)=0$. So in that case  rigidity ($h^0=0$) is equivalent to regularity ($h^1=0$). 
One 'expects' any curve on such $X$ to be rigid: e.g. Clemens
has famously conjectured that on a general quintic threefold in $\P^4$,
all rational curves are rigid.  However the most obvious curves one can construct
are often not rigid, so it is a nontrivial problem, partially motivated by Physics 
\cite{witten1}, \cite{lin-wu-yau},
 to construct rigid, or more generally regular curves (and relatedly, vector bundles)
on Calabi-Yau manifolds, in particular those that are complete intersections in
projective space, the so-called CICY manifolds, which anyhow contain a lot of non-rigid curves.
\par
In the case $K_X$ ample, one has $C>(-K_X)<0$ in the above formula for
$\chi(N)$, hence a regular curve $C\subset X$ (more generally, a curve with $\chi(N)\geq 0$)
must anyhow have genus 0 and canonical degree $C.K_X\leq m-3$.
And whenever $0\leq\chi(N)\leq \rk(N)$ and $g=0$, 
regularity is equivalent to balancedness of $N$ (no $\O(-2)$ summands).\par
\subsection*{Known results} Results to date have largely focused
 on rigid curves on CY 3-folds. They go back to Clemens \cite{clemens-quintic}, 
who first constructed infinitely many rigid rational curves on the general
quintic in $\P^4$. This was extended by Katz \cite{katz-5tic},
then by   Ekedahl, Johnsen and Sommervoll \cite{ekedahl1}, 
to rigid rational curves on all  CICY {threefolds} (CICY3fs).
Subsequently, based on Clemens's method,
 Kley \cite{kley} constructed on  any CICY3f $X$  curves of
positive but sufficiently low genus $g$  (depending on $X$, e.g. $g<35$ for the quintic) and sufficiently
high degree (depending on the genus). For the quintic threefold, any $g\geq 0$ and
large $d$, Zahariuc \cite{zahariuc} constructs
a map of degree $d$ to its image from a smooth curve of genus $g$
to $X$ which is set-theoretically  isolated (it is not proved the map
is an embedding or  infinitesimally rigid).
Other  existence and nonexistence results for
curves of small degree (relative to the genus or otherwise) were obtained by
Knutsen \cite{knutsen}, Clemens and Kley \cite{clemens-kley}
 and Yu \cite{yu-cicy}, \cite{yu-5tic}.
I am not aware of results in the literature for curves on higher-dimensional CICYs.
Some results on vector bundles are in \cite{lin-wu-yau}, \cite{thomas-vbcy} and references therein.
On the other hand results on balanced rational and irrational curves on \emph{Fano} hypersurfaces
were obtained in \cite{caudatenormal} and \cite{elliptic}.
\par
\subsection*{New results} The purpose of this paper is to enlarge the known collection of   'good' curves
and bundles on CICY and general type  manifolds by constructing, on some $m$-dimensional CICYs,
 curves $C$ of all large enough degrees with the following
properties: \begin{itemize}\item
$C$ rigid of any genus, $m=3$; or\item
 $C$ rigid of genus 1, $m\geq 3$; or\item
 $C$ rational and balanced, $m\geq 3$; \end{itemize}
additionally, we will construct some 
balanced rational curves of degree at most $n-4$ on a general canonical
(degree- $(n+2)$) hypersurface in $\P^n$.

Precisely, we will prove the
following.
\begin{thm}\label{main-thm}
	Let $X$ be either a general hypersurface of degree $n+1$ in $\P^n$ or a general $(2,n)$
	complete intersection in $\P^{n+1}, n\geq 4$, and let $e$ be an  integer. Then\par
	(i) if $e\geq 2n-1$, there exist smooth rational curves of degree $e$
	on $X$ with normal bundle $(n-4)\O\oplus2\O(-1)$;\par
(ii) let $g\geq 1$ be an integer and assume \eqspl{e-ineq}{e\geq 2n(n-1)(g+1)+1.}
If either $g=1$ or $n=4$, then 
	there exists a smooth rigid curve of genus $g$ and degree $e$ on $X$.
	\end{thm}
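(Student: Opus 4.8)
The plan is to construct the desired curves by degeneration. The key computation at the outset is $\chi(N_{C/X})$: when $K_X$ is trivial (which holds here, since $X$ is a degree-$(n+1)$ hypersurface in $\P^n$ or a $(2,n)$ complete intersection in $\P^{n+1}$, both Calabi--Yau) and either $m=n-1=3$ (i.e.\ $n=4$) or $g=1$, we have $\chi(N_{C/X})=0$. In those cases rigidity ($H^0(N)=0$) is equivalent to regularity ($H^1(N)=0$), so it suffices to find a single curve $C$ of genus $g$ and degree $e$ with $H^0(N_{C/X})=0$. Because rigidity and regularity are open and upper/lower-semicontinuous in families, it is enough to exhibit such a curve on \emph{some} special member of the family of $X$'s and then deform to the general $X$; this is what lets me degenerate $X$ as well as $C$.

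**The degeneration.**

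First I would degenerate $X$ to a reducible or otherwise simpler variety $X_0$—for instance, allowing the defining polynomial(s) to specialize so that $X_0$ acquires a convenient component structure, or so that $X_0$ contains a distinguished surface $S$ (such as a scroll or a plane-like locus) on which curves are easy to control. The idea, going back to Clemens and refined by Katz and Kley, is to build the limit curve $C_0$ inside $X_0$ as a nodal curve, typically a union $C_0 = C' \cup (\text{chain/components})$ where part of $C_0$ sits on $S$ and the rest degenerates to lines or low-degree pieces whose normal bundles are understood. I would arrange $C_0$ so that its arithmetic genus is $g$ and total degree is $e$, which is where the lower bounds on $e$ (namely $e\geq 2n-1$ in (i), and $e\geq 2n(n-1)(g+1)+1$ in (ii)) enter: they guarantee enough ``room'' to absorb the genus and to make the normal bundle of the limit sufficiently positive away from the bad locus.

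**Controlling the normal bundle.**

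The heart of the argument is a normal-bundle computation on the degenerate curve $C_0\subset X_0$. I would use the standard exact sequences relating $N_{C_0/X_0}$ to $N_{C_0/\P^n}$ (or $\P^{n+1}$) and the conormal of $X_0$, together with the gluing/node sequences that express the normal bundle of a nodal curve in terms of the normal bundles of its components twisted by the nodes. The goal is to show $H^0(N_{C_0/X_0})=0$ on the special fiber; by semicontinuity this forces $H^0(N_{C/X})=0$ for the general smooth $C$ on the general $X$, giving rigidity in case (ii). For case (i), where $g=0$ and $\chi(N)=n-4\geq 0$ with $\rk N = n-2$, the statement is instead that $N$ is \emph{balanced}: the strongest possible splitting type $(n-4)\O\oplus 2\O(-1)$ with no $\O(-2)$ summand. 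As the excerpt notes, for $g=0$ with $0\le \chi(N)\le \rk(N)$, balancedness is equivalent to regularity $H^1(N)=0$; so again the target is a cohomological vanishing on a cleverly chosen degenerate model, which then spreads out.

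**The main obstacle.**

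The hardest part, I expect, is the normal-bundle bookkeeping at the nodes of $C_0$ and the verification that no unwanted sections survive—equivalently, that no $\O(-2)$ (or worse) summand appears and that $H^0(N_{C_0/X_0})$ really vanishes. Limits of normal bundles are delicate: $N_{C_0/X_0}$ can jump, and one must either work with a well-chosen flat limit (a ``smoothing'' whose normal bundle is the honest limit) or else prove the vanishing directly on $C_0$ and invoke semicontinuity carefully. Controlling the contribution of the component lying on the surface $S\subset X_0$—making its normal directions sufficiently negative so that sections are killed, while keeping the curve smoothable with the right genus—is the crux. The restriction to $g=1$ or $n=4$ in part (ii) reflects exactly the $\chi(N)=0$ condition, and the large explicit bound on $e$ is what provides the flexibility to achieve the required negativity while maintaining smoothness and the prescribed numerical invariants.
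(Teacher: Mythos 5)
Your high-level strategy (degenerate $X$ together with $C$, prove $H^0(N_{C_0/X_0})=0$ on the special fibre, then smooth out and use semicontinuity, with the observation that $\chi(N)=0$ when $g=1$ or $n=4$ makes rigidity equivalent to regularity) is indeed the paper's strategy. But the proposal has a genuine gap: it never supplies the mechanism that makes the vanishing $H^0(N_{C_0/X_0})=0$ provable, and this is precisely where the paper's new content lies. In any degeneration of this type ($X_0=X_1\cup_Z X_2$, with the genus-$g$ component $C_1\subset X_1$ and lines or conics attached on $X_2$), the sections of $N_{C_0/X_0}$ over $C_1$ are sections of a corank-one down modification of $N_{C_1/X_1}$ at the attachment points $C_1\cap Z$. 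These points are \emph{not} general points of $C_1$: they are forced to lie on the double locus $Z$, a divisor in the fixed system $|\O_{X_1}(1)|=|-K_{X_1}|$. So the ``standard exact sequences'' and genericity-of-points arguments you invoke do not close the argument. The paper resolves this with two ingredients your sketch lacks: (a) the notion of a curve being \emph{regular relative to} a linear system, together with Proposition \ref{regular-prop} producing curves of every genus $g$ and all large degrees on a degree-$n$ hypersurface that are regular relative to $|-K|$ (this is also the actual source of the numerical bound on $e$); and (b) Lemma \ref{lines-lem}, which shows that the attaching lines/conics can be chosen through prescribed points of $Z$ \emph{and} with generic upper subspace of their normal bundle at those points --- crucially, this requires moving the pair $(X_2,L)$ while fixing $Z$, not just moving $L$ inside a fixed $X_2$. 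Only with both (a) and (b) is the modification of $N_{C_1/X_1}$ ``general'' in the sense needed to kill all sections.

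A secondary point: the geometric picture you describe --- a distinguished surface $S\subset X_0$ carrying part of $C_0$, with ``sufficiently negative'' normal directions along $S$ --- is the Clemens--Katz--Kley mechanism, not the one used here. That method is exactly what the paper is trying to go beyond: as the introduction notes, it yields rigid curves only for restricted genus ranges (e.g.\ $g<35$ on the quintic) and says nothing in higher dimension, whereas the two-component normal-crossing construction (quasi-cone degeneration for the hypersurface case, reducible-quadric degeneration for the $(2,n)$ case) combined with relative regularity is what gives all $g$ for $n=4$, all $g=1$ for $n\geq 4$, and the balanced rational curves of part (i). So as written the proposal is a plan whose hardest step is correctly identified but left unexecuted, and the fallback picture it gestures at would not prove the theorem in the stated generality.
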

As noted above,  
this yields, e.g. the first construction of rigid curves of any genus on the quintic 3-fold,
as well as the first construction of rigid or balanced curves on higher-dimensional CYs.
Note the assumption $g=1$ or $n=4$ implies $\chi(N)=0$,
 so in that case rigidity is equivalent to regularity.
For $g=0$ we have $\chi(N)=n-4$ so the curve cannot be rigid
if $n>4$. Anyhow we have $\chi(N)\geq 0$ in all cases.\par 
The case $g=1, n=4$ (already done by Kley \cite{kley}) 
has the following application to rigid vector bundles.
See \S \ref{proof-sec} for the proof.
\begin{cor}\label{bundle-cor}
	If $X$ is either a general quintic threefold in $\P^4$ or a general $(2,4)$
	complete intersection in $\P^5$, then for every  integer $e\geq 49$ $X$
	carries a rigid, indecomposable, 
	semistable rank-2 vector bundle $E$ with $c_1(E)=0, c_2(E)=e\lambda$ where $\lambda$
	is the class of a line.
	\end{cor}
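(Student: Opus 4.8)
The plan is to use the standard Serre correspondence between rank-2 vector bundles with $c_1=0$ and curves embedded in a Calabi–Yau threefold, combined with the rigid elliptic curve produced by Theorem~\ref{main-thm}(ii) in the case $g=1, n=4$. First I would fix the threefold $X$ (either the quintic or the $(2,4)$ complete intersection, so that $K_X$ is trivial) and take $C\subset X$ a smooth rigid curve of genus $1$ and degree $d$ supplied by Theorem~\ref{main-thm}(ii); the hypothesis $e\geq 48$ should correspond, after the numerology below, to $d$ being large enough for that theorem to apply. Since $K_X=\O_X$ and $g=1$, the adjunction formula gives $\deg N_{C/X}=2g-2=0$, and one checks $\omega_C=\O_C$. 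The Serre construction then produces an extension
\beq
\exseq{\O_X}{E}{\I_C}
\eeq
classified by an element of $\Ext^1(\I_C,\O_X)\cong H^0(C,\omega_C\otimes N_{C/X})^\vee$ (using $\omega_X=\O_X$), and because $C$ is a genus-$1$ curve this $\Ext$ group is nonzero, so a locally free extension $E$ of rank $2$ with $c_1(E)=0$ and $c_2(E)=[C]=e$ exists. The matching of degrees $d$ with the invariant $e$ and the constant $48$ is a routine Chern-class computation that I would carry out explicitly.

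Having built $E$, I would verify the three asserted properties in turn. \emph{Indecomposability}: if $E\cong L_1\oplus L_2$ with $c_1(E)=0$, then $C$ would be cut out by a single section of a line bundle, forcing $C$ to be a complete intersection or otherwise constrained; on a general $X$ with $\mathrm{Pic}(X)=\Z$ this contradicts the genus and degree of $C$, so $E$ is indecomposable. \emph{Semistability}: with $\mathrm{Pic}(X)=\Z\cdot\O_X(1)$, a destabilizing sub-line-bundle would be some $\O_X(k)$ with $k>0$ mapping to $E$; pulling back along $E\to\I_C$ one sees a nonzero section of $\O_X(k)$ vanishing on $C$, and a degree estimate (again using that $C$ is the given rigid curve of the specified degree) rules this out for $c_1=0$. \emph{Rigidity of $E$}: this is the crux, and I expect it to be the main obstacle. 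One wants $\Ext^1_X(E,E)=0$ (or at least $H^0$ of the relevant deformation complex vanishing), i.e. that $E$ does not deform.

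For the rigidity I would relate deformations of $E$ to deformations of the pair $(C)$ via the Serre exact sequence. Applying $\Hom(-,E)$ and $\Hom(E,-)$ to the defining sequence and chasing the resulting long exact sequences, the space $\Ext^1_X(E,E)$ is controlled by $H^0(N_{C/X})$, $H^1(\O_X)$, and $H^1(E)$-type terms. Here the rigidity of $C$ — namely $H^0(N_{C/X})=0$, which is exactly the regularity established in Theorem~\ref{main-thm}(ii) since $\chi(N)=0$ forces $H^1(N_{C/X})=0$ as well — feeds in to kill the term coming from moving $C$. The remaining cohomology, $H^i(\O_X)$ and $H^i(E)$, I would control using that $X$ is a Calabi–Yau threefold of Picard rank one (so $H^1(\O_X)=H^2(\O_X)=0$) together with vanishing theorems and Serre duality, the latter being self-dual because $\omega_X=\O_X$ makes $\Ext^1(E,E)\cong\Ext^2(E,E)^\vee$ and $E$ self-dual (as $E^\vee\cong E$ when $c_1=0$ and $\rk=2$). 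The hard part will be ensuring that the boundary maps in these long exact sequences are genuinely the zero maps rather than merely bounding dimensions, so that the contributions cancel and $\Ext^1_X(E,E)=0$; I would handle this by choosing the extension class generically and invoking the rigidity of $C$ to trivialize the obstruction.
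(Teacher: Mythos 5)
Your construction of $E$ coincides with the paper's: both apply the Serre construction to the rigid elliptic curve furnished by Theorem \ref{main-thm}(ii), and your handling of indecomposability and semistability (via $\mathrm{Pic}(X)=\Z$ and the defining extension) is essentially what the paper does. The problem is the rigidity step, which you yourself call the crux and then leave open. Your proposed repair --- ``choosing the extension class generically and invoking the rigidity of $C$ to trivialize the obstruction'' --- is vacuous here: since $C$ is an elliptic curve in a Calabi--Yau threefold, $\mathcal{E}xt^1(\I_C,\O_X)\cong\omega_C\otimes\omega_X^{-1}|_C\cong\O_C$, and because $H^1(\O_X)=H^2(\O_X)=0$ the local-to-global sequence gives $\Ext^1(\I_C,\O_X)\cong H^0(\O_C)\cong\C$. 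So the extension class is unique up to scalar and there is no generic choice to make; moreover only the class corresponding to a generator of $\O_C$ (i.e.\ any nonzero one) yields a locally free $E$, a point your ``the Ext group is nonzero, so a locally free extension exists'' glosses over. Incidentally, your duality formula $\Ext^1(\I_C,\O_X)\cong H^0(C,\omega_C\otimes N_{C/X})^\vee$ cannot be correct: rigidity of $C$ would force it to vanish, contradicting the very nonvanishing you need to build $E$.

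The missing idea --- which is the paper's actual argument and which sidesteps your untraceable boundary maps entirely --- is to exploit the cohomology of $E$ itself. From $\exseq{\O_X}{E}{\I_C}$ together with $H^0(\I_C)=0$ and $H^1(\I_C)=0$ (the latter because $H^0(\O_X)\twoheadrightarrow H^0(\O_C)$ and $H^1(\O_X)=0$), one gets $h^0(E)=1$ and $h^1(E)=0$. The vanishing of $H^1(E)$ means the unique (up to scalar) section of $E$ extends to every infinitesimal deformation of $E$; hence any deformation of $E$ deforms the pair (bundle, section) and therefore deforms the zero scheme $C$ of that section. Conversely, functoriality of the Serre construction turns deformations of $C$ into deformations of $E$. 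This identifies the deformation functors of $C$ and of $E$, so $H^0(N_{C/X})=0$ immediately gives rigidity of $E$ --- no computation of $\Ext^1(E,E)$, and no analysis of connecting homomorphisms, is required. Without this (or some substitute that actually pins down your boundary maps), your proof of the key assertion is incomplete.
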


On the $(2,4)$ complete intersection, two other rigid rank-2 bundles (with odd $c_1$)
were constructed by Richard Thomas \cite{thomas-vbcy}, who also constructs numerous
other rigid examples on K3 fibrations. He has also constructed in \cite{thomas-obsbundle}
an example of a curve and a  bundle on a CY3f of special moduli that are set-theoretically isolated but
not (infinitesimally) rigid. \par
Finally we will undertake a (necessarily limited) incursion into the forbidding territory
of general type by proving the following:
\begin{thm}\label{gt-thm}
	Let $X$ be a general hypersurface of degree $n+2$ in $\P^n, n\geq 5$. Then $X$ contains
	rational curves of degree $e\leq n-4$ with normal bundle 
	\[N=(n-4-e)\O\oplus (e+2) \O(-1).\]
	
\end{thm}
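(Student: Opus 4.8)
The plan is to construct the desired rational curves of degree $e\leq n-4$ on the general degree-$(n+2)$ hypersurface $X\subset\P^n$ by a degeneration argument, following the overall strategy of the paper. First I would fix a rational normal-type curve $C$ of degree $e$ in $\P^n$ and compute its normal bundle in the ambient $\P^n$: for a suitable embedding $C\cong\P^1\hookrightarrow\P^n$ of degree $e$, the normal bundle $N_{C/\P^n}$ is a direct sum of line bundles of positive degree, and the key point is to understand the normal bundle sequence
\[
0\to N_{C/X}\to N_{C/\P^n}\to \O_C(X)|_C\to 0,
\]
where $\O_C(X)=\O_C(n+2)$ has degree $e(n+2)$. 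The rank of $N_{C/X}$ is $n-2$ and a quick Euler-characteristic count via $\chi(N_{C/X})=C.(-K_X)+(n-3)(1-g)$ with $K_X=\O_X(2)$, $g=0$, gives $\chi(N_{C/X})=-2e+(n-3)$, consistent with the claimed splitting type $(n-4-e)\O\oplus(e+2)\O(-1)$ (total degree $-(e+2)$, rank $n-2$).

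Second, the heart of the matter is to pin down the splitting type rather than just the Euler characteristic. Here I would degenerate $X$ to a reducible or otherwise special member of the linear system $|\O_{\P^n}(n+2)|$ whose defining equation is adapted to $C$, so that the restriction map from $H^0(\O_{\P^n}(n+2))$ to the relevant jet spaces along $C$ can be controlled explicitly. The strategy should mirror the earlier construction in the paper (the Blowup/Node-Scroll machinery referenced by \texttt{bthm} and \texttt{nsthm}): build $C$ inside a scroll or a chain of low-degree rational curves, compute the normal bundle of each component together with the gluing/coboundary data at the nodes, and then check that the limiting normal sheaf has the asserted balanced-type splitting $(n-4-e)\O\oplus(e+2)\O(-1)$. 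Semicontinuity of $h^0$ and $h^1$ of twists of the normal bundle then transports the splitting type from the special fiber to the general $X$, provided the special-fiber curve is unobstructed and the relevant cohomology is as expected.

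Third, I would verify that for the constructed curve the normal bundle has \emph{no} summand of degree $\leq -2$ on the relevant locus, i.e.\ that it is as balanced as the splitting $(n-4-e)\O\oplus(e+2)\O(-1)$ demands; this is exactly the condition $H^1(N_{C/X})=0$ together with the precise count of trivial versus $\O(-1)$ summands. Concretely one shows $h^0(N_{C/X})=n-4-e$ and $h^1(N_{C/X})=0$, which forces the splitting since on $\P^1$ a bundle of rank $n-2$ and degree $-(e+2)$ with these cohomology dimensions and no positive summands must be $(n-4-e)\O\oplus(e+2)\O(-1)$. The constraint $e\leq n-4$ is precisely what keeps the number of trivial summands $n-4-e$ non-negative, so the balanced rational curve exists only in this low-degree range.

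The hard part will be the explicit control of the normal bundle in the degenerate configuration: unlike the Calabi-Yau case where $\chi(N)=0$ makes the bookkeeping tight, here $\chi(N_{C/X})=n-3-2e$ can be positive, so one must separate the $\O$ and $\O(-1)$ contributions by hand and rule out any accidental jump to $\O(1)\oplus\O(-2)$ (which would have the same rank and degree). I expect this to require a careful analysis of the coboundary maps at the nodes of the degenerate curve, ensuring the gluing is generic enough to prevent such jumps while still being compatible with lying on a degree-$(n+2)$ hypersurface. Once the special-fiber splitting is nailed down and unobstructedness is checked, the passage to the general $X$ is routine semicontinuity.
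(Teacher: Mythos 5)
Your outline contains concrete numerical errors and, more importantly, never actually supplies the construction that is the entire content of the theorem. First the numerics: by adjunction $K_X=\O_X(d-n-1)=\O_X(1)$ for $d=n+2$, not $\O_X(2)$, and $\dim X=n-1$, so the Euler characteristic formula reads $\chi(N_{C/X})=C.(-K_X)+(n-1-3)(1-g)=-e+(n-4)$. Your value $-2e+(n-3)$ is wrong on both counts, and it is \emph{not} consistent with the claimed splitting, whose Euler characteristic is $(n-4-e)$; the two agree only at $e=1$, so the ``consistency check'' you report was not actually performed, and the wrong $\chi$ propagates into your final paragraph. Second, your transport mechanism is insufficient as stated: $h^0(N)=n-4-e$ and $h^1(N)=0$ do not force the splitting $(n-4-e)\O\oplus(e+2)\O(-1)$, since for instance $\O(1)\oplus(n-6-e)\O\oplus(e+3)\O(-1)$ has the same rank, degree, $h^0$ and $h^1$. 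One needs in addition $h^0(N(-1))=0$ on the special fiber --- equivalently one uses that balancedness is an open condition, which is how the paper concludes.

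The genuine gap is that your ``heart of the matter'' step is a restatement of the goal: ``degenerate $X$ to a special member adapted to $C$, build $C$ inside a scroll or a chain of low-degree rational curves, control the coboundary maps at the nodes'' names no degeneration, no curve, and no computation; moreover the Blowup/Node-Scroll machinery you invoke is not developed or used anywhere in this paper (those theorem environments are defined in the preamble but never occur), and a nodal chain with coboundary analysis is not how the proof goes. The paper's actual argument is a quasi-cone degeneration $X_0=X_1\cup_Z X_2\subset P_1\cup P_2$, with $X_1$ the blowup of $\P^{n-1}$ along $Y=F_{n+1}\cap F_{n+2}$ (a resolved degree-$(n+2)$ quasi-cone), $X_2$ a degree-$(n+1)$ hypersurface in $\P^n$, and $Z\simeq F_{n+1}$. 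The main component of the limit curve is the birational transform $C_1$ of a degree-$e$ rational curve $C\subset\P^{n-1}$, where $F_{n+1}$ is general and $F_{n+2}$ is chosen through $C\cap F_{n+1}$, so that $N_{C_1/X_1}$ becomes a \emph{general corank-one down modification} of the positive balanced bundle $N_{C/\P^{n-1}}$ (rank $n-2$, degree $ne-2$) at the $(n+1)e$ points of $C\cap Y$; the resulting degree is $-(e+2)$, and genericity of the modification yields exactly $(n-4-e)\O\oplus(e+2)\O(-1)$ when $e\leq n-4$. The other component consists of lines on $X_2$ with normal bundle $(n-4)\O\oplus 2\O(-1)$, supplied by Lemma \ref{line-lem} and Remark \ref{line-rem} with $d=n+1$, glued so that the limit is lci as in the proof of Theorem \ref{main-thm}; then $H^1=0$ lets the curve smooth out with $X_0$, and openness of balancedness transports the splitting to the general $X$. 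Note this explicit modification computation cannot be sidestepped by the complete-intersection trick of Lemma \ref{line-lem}: that method works only for lines and conics, while here $3\leq e\leq n-4$ is the interesting range, so without pinning down the special pair $(X_0,C_0)$ and its normal bundle your proposal does not prove the theorem.
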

For $n=5, 6$ the curve in question is a line or conic and the result follows from
the elementary Lemma \ref{line-lem}.
So  Theorem \ref{gt-thm} is only interesting for $n\geq 7$. Of course here again 
$\chi(N)\geq 0$.
\par
Actually the proof of Theorem \ref{gt-thm} extends
to hypersurfaces of degree $d\geq n+2$  to yield balanced rational curves of degree 
$\leq (n-4)/(d-n-1)$.

\subsection*{Methods} The idea of the proof of Theorem \ref{main-thm} is to use a suitable degeneration of $X$
to a reducible normal-crossing variety $X_1\cup X_2$.
In the case where $X$ is a hypersurface we use a so-called quasi-cone degeneration
where $X_1$ is 'resolved quasi-cone' i.e. the blowup at a point $q$
of a quintic with multiplicity $n$ at $q$,
and $X_2$ is a degree-$n$ hypersurface in $\P^n$.
In the case where $X$ is a $(2,n)$ complete intersection, $X_0$ is a the complete intersection of a 
degree-$n$ hypersurface with
a reducible quadric, so that
 $X_1, X_2$ are both degree-$n$ hypersurfaces
in hyperplanes $H_1, H_2\simeq\P^{n+1}$ such that $X_1\cap H_1\cap H_2=X_2\cap H_1\cap H_2$.
We use results from \cite{caudatenormal} and  \cite{elliptic} about existence of curves on 
a degree-$n$ hypersurface  with
'good' normal bundle, together with a new notion, introduced in \S 1, of relative
regularity of a curve or bundle, which is analogous to
a special case of the notion of 'ultra-balance'
introduced in \cite{elliptic} but where the 'test points' are not general but lie on a divisor in
a specified system.\par
Theorem \ref{gt-thm} is proved similarly.\par
The question of existence of good curves of genus $g>0$ and high degree
 on other CICY types of any dimension, or of 
 rigid curves of genus $>1$ on any CICYs of dimension $>3$ (where $\chi(N)<0$) remains open.
 
 \par
As for general type we mention here the following
 \begin{con*}
 	A general hypersurface of degree $\geq n+2$  in $\P^n, n\geq 4$ does not contain any
 	irreducible rational curves of degree $\geq n-3$.
 	\end{con*}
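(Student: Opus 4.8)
\emph{Strategy and reduction.} The statement ranges over infinitely many degrees $e$, so I read ``general'' as \emph{very general}, and for each fixed $e\geq n-3$ and each $d\geq n+2$ I would show that the locus $B_e\subseteq|\O_{\P^n}(d)|$ of hypersurfaces containing an irreducible rational curve of degree $e$ is a \emph{proper} closed subvariety; a very general $X$ then avoids the countable union $\bigcup_{e\geq n-3}B_e$. To study $B_e$ I would form the incidence correspondence
\[\mathcal I=\{(f,X)\in \overline{M}_{0,0}(\P^n,e)\times|\O_{\P^n}(d)| : f^\ast F_X=0\},\]
with $F_X$ the defining form and $N=\dim|\O_{\P^n}(d)|$. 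Since $\overline{M}_{0,0}(\P^n,e)$ is irreducible of dimension $(n+1)e+n-3$ and irreducible rational curves of degree $e$ are exactly the images of the dense locus of birational maps (multiple covers and boundary maps having images of strictly smaller degree, hence absorbed into smaller $e$), the problem becomes: the second projection $\pi_2$ is not dominant. Note this is \emph{not} equivalent to $\dim\mathcal I<N$, since on certain strata the $\pi_1$-fibre is essentially constant; I must therefore bound $\dim\overline{\pi_2(\cdot)}$ stratum by stratum.

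\emph{The central quantity and the nondegenerate count.} The decisive invariant is the number of conditions $\rho(f)=\dim R_d$ imposed by $f$, where $R_d$ is the degree-$d$ piece of the homogeneous coordinate ring of the image curve, i.e.\ the rank of $H^0(\O(d))\to H^0(\O_{\P^1}(de))$. On a stratum $Z$ where $\rho$ is constant and the fibres vary genuinely, $\dim\overline{\pi_2}\le\dim Z+(N-\rho)$, so properness follows from $\dim Z<\rho$. For the nondegenerate stratum (curves spanning $\P^n$) a general curve imposes $\rho=de+1$ conditions (projective normality), giving
\[\dim\mathcal I-N=(n+1-d)e+n-4,\]
which is negative exactly when $(d-n-1)e>n-4$, i.e.\ precisely when $e\geq n-3$ (for $d=n+2$; larger $d$ only helps). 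This reproduces the conjectured threshold on the nose, and is consistent with Theorem~\ref{gt-thm}, where curves of degree $<n-3$ do occur.

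\emph{Degenerate curves.} The work lies in curves whose span is a proper $\Lambda\cong\P^k$ with $2\leq k\leq n-1$ (as $e\geq n-3\geq2$ forces $k\geq2$), which I would organize over $\gr(k+1,n+1)$, of dimension $g_k=(k+1)(n-k)$. If $e>d$ then $X\cap\Lambda$ has degree $d<e$ and cannot contain the curve, so $\Lambda\subseteq X$; but a general hypersurface of degree $\geq n+2$ contains no $\P^k$ with $k\geq2$, since $\binom{d+k}{k}>(k+1)(n-k)$, so these contribute nothing. If $e\leq d$, the curve is nondegenerate in $\Lambda$, and the count of the previous paragraph applied inside $\P^k$ bounds the codimension of its bad locus by $(d-k-1)e-k+4$; bundling over the Grassmannian, the degenerate contribution is proper once $(d-k-1)e-k+4>(k+1)(n-k)$. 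Writing $j=n-k$ and using $e\geq\max(k,n-3)$, this reduces to $j^2-3j+1>0$ for $j\geq3$ and to $4-j>0$ for $j\leq2$, both of which hold; the cases $n=5,6$ (lines and conics) serve as base.

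\emph{The main obstacle.} Everything above rests on controlling $\rho(f)=\dim R_d$, and this is where the real difficulty sits. The clean value $\rho=de+1$ is valid only for \emph{general} curves and only in the projectively normal range; for small $k$ with $e$ near $d$ the true value is smaller (for plane curves, $\rho=\binom{d+2}{2}-\binom{d-e+2}{2}$), and more seriously a positive-dimensional locus of \emph{special} curves may impose still fewer conditions, enlarging the $\pi_1$-fibres. One must therefore show $\dim Z_\rho<\rho$ for \emph{every} value of the lower-semicontinuous rank $\rho$, which amounts to a uniform lower bound on the Hilbert function $\dim R_d$ of an \emph{arbitrary} irreducible rational curve of degree $e$ in its span, coupled with a bound on the dimension of the family on which that bound degenerates. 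Proving such estimates in full generality—rather than for general curves, where projective normality is classical—is the crux, and is presumably why the assertion is offered only as a conjecture.
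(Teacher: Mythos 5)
First, a point of calibration: the paper does not prove this statement at all --- it is stated explicitly as a conjecture (``Without much reason we mention here the following''), supported only by the remark that an easy dimension count rules out irreducible conics on a general septic in $\P^5$. So there is no proof in the paper to compare against, and your proposal cannot be ``the same approach'' or ``a different approach''; it can only be assessed as an attempt at an open problem. On that footing: your incidence-correspondence framework is the natural one, and your numerology is right. The count $\dim\overline{M}_{0,0}(\P^n,e)-(de+1)=(n+1-d)e+n-4$ does reproduce the threshold $e\geq n-3$ at $d=n+2$, it is consistent with Theorem \ref{gt-thm} (which produces balanced rational curves only in degrees $e\leq n-4$), and your $k=2$ stratum count specializes, at $e=2$, $n=5$, $d=7$, to exactly the paper's septic/conic computation ($14<15$). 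But you are correct, and commendably explicit, that this is not a proof: the entire count hinges on the lower-semicontinuous rank $\rho(f)=\dim R_d$, and the conjecture quantifies over \emph{all} irreducible rational curves, including highly singular and projectively special ones, for which no uniform lower bound on the Hilbert function coupled with a bound on the dimension of the locus where it degenerates is known. That gap is the whole problem, not a technicality, which is presumably why the paper leaves the statement as a conjecture.

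One concrete error inside the sketch, separate from the acknowledged crux: your dichotomy for degenerate curves spanning $\Lambda\cong\P^k$ --- ``if $e>d$ then $X\cap\Lambda$ has degree $d<e$ and cannot contain the curve, so $\Lambda\subseteq X$'' --- is a B\'ezout argument that is only valid for $k=2$, where $X\cap\Lambda$ is a plane curve. For $k\geq3$, $X\cap\Lambda$ is a hypersurface of dimension $k-1\geq2$ in $\Lambda$ and contains irreducible curves of arbitrarily large degree (a quintic surface in $\P^3$ contains rational curves of every degree), so the case $e>d$, $k\geq3$ is simply not handled by your reduction. The repair, for \emph{general} curves, is via maximal rank rather than B\'ezout: when $de+1\geq\binom{d+k}{k}$ a nondegenerate curve of maximal rank in $\Lambda$ imposes $\rho=\binom{d+k}{k}$ conditions, so the fibre of $\pi_2$ consists precisely of hypersurfaces containing $\Lambda$, and one concludes as you do from the absence of linear spaces on general $X$. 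But this repair again invokes maximal rank, i.e., it funnels straight back into the same unresolved crux for special curves that you identified --- so the corrected argument has exactly one gap, not two, but the gap is essential.
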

The conjecture is true for $n=4$ (no rational curves at all) by \cite{voisin-e}.
As for $n=5$, an easy dimension count shows that a general septic $X\subset\P^5$ contains
 no irreducible conics. The case of higher-degree rational curves is not clear.
 \par
I would like to thank Sheldon Katz and Richard Thomas for helpful comments
and references.
\subsection*{Notation and conventions}
In this paper we work over $\C$.\par
On a curve $C$ isomorphic to $\P^1$ we denote by $\O(k)$ the line bundle of degree
$k$. A bundle on $C$ is \emph{balanced} if it has the form $a\O(k)\oplus b\O(k-1)$.
In this case the subbundle $a\O(k)$ is uniquely determined, called the 
\emph{upper subbundle}.\par
A \emph{quasi- cone} is a hypersurface $X$ of degree $d$ in $\P^n$ with a point of multiplicity
$d-1$, called the \emph{quasi-vertex}. Via projection, the blowup of $X$ in the quasi-vertex
is realized as the blowup of $\P^n$ in a $(d, d-1)$ complete intersection $Y$, where the
exceptional divisor is the birational transform of the unique hypersurface of degree ${d-1}$ containing $Y$.
The blow-up of a variety $X$ in a subvariety $Y$ is denoted by $B_YX$.

 \section{Fan, fang and quasi-cone degenerations}\label{fan-sec}

 See \cite{caudatenormal} for details.
 We recall that a \emph{fan} (also called a 2-fan) is a reducible normal-crossing variety
 of the form
 \[P_0=P_1\cup_E P_2\]
 where
 \[P_1=B_p\P^n, P_2=\P^n\]
 and $E\subset P_1$ is the exceptional divisor and $E\subset P_2$
 is a hyperplane. The family
 \[\mathcal P=B_{(p,0)}(\P^n\times\A^1)\]
 is called a standard fan degeneration and realizes $P_0$ as the
 special fibre in a family with general fibre $\P^n$.\par
 A \emph{hypersurface of type $(d_1, d_2)$} in $P_0$ has the form
 \[X_0=X_1\cup_Z X_2\]
 where \[X_1\in |d_1H_1-d_2E|_{P_1}, X_2\in |d_2H_2|_{P_2}, X_1\cap Z=X_2\cap Z\]
 	($H_1, H_2$ are the respective hyperplanes). If $d_2=d_1-1$, $X_0$ is said to
 	be of \emph{quasi-cone type}. Given a family $\bar\cX\subset\P^n\times\A^1$
 	of hypersurfaces of degree $d_1$ whose special fibre has multiplicity $d_2$
 	at $p$, its birational transform $\cX\subset\cP$ is a family of
 	hypersurfaces in $\P^n$ specializing to one of type $(d_1, d_2)$. 
 	\par
 	More generally, a \emph{fang of type} $\l$ is a variety of the form
 	\[P_0=P_1\cup_EP_2\]
 	where
 	\[P_1=B_{\P^\l}\P^n, P_2=B_{\P^{n-1-\l}}\P^n, E=\P^\l\times\P^{n-1-\l}.\]
 	This is the special fibre of the degeneration
 	\[\mathcal P=B_{\P^\l\times 0}\P^n\times\A^1\to\A^1.\]
\section{Relatively regular bundles and curves}

The purpose of this section is to study a property of vector bundles which, when applied to normal
bundles, is helpful in studying the normal bundle to a union of curves.\par
Let $C$ be a smooth curve, $L$ a line bundle on $C$,  $0\neq V\subset H^0(L)$ a linear system, 
and $D\in V$ a reduced member.
\begin{defn}
(i) A vector bundle $E$ on $C$ is said to be \emph{regular 
relative to to $D$}
if it is regular, i.e. $H^1(E)=0$ and, for any subset $D_1\subset D$
and  a general quotient  $U$ of $E|_{D_1}$ locally of rank 1,
the composite map
\[\rho_U:H^0(E)\to E|_{D_1}\to U\]
has maximal rank. 
$E$ is said to be regular relative to $V$ is it is regular relative to some divisor
(or equivalently, a general divisor) $D\in V$.\par
(ii) A curve $C$ on a variety $X$ endowed with a linear system $V$
 is said to be regular relative to $V$
if the corresponding property holds for its normal bundle $N=N_{C/X}$
and the restricted system $V|_C$.\end{defn}
More explicitly, Condition (i) means that for any distinct $p_1,...,p_k\in D$
and respective general 1-dimensional quotients $U_1,...,U_k$ of
$E_{p_1},...,E_{p_k}$, the natural map $H^0(E)\to\bigoplus\limits_{i=1}^k U_i$
has maximal rank. Or equivalently, the kernel $E'$ of the natural map $E\to \bigoplus U_i$
has either $H^1(E')=0$ or $H^0(E')=0$.\par
This notion is essentially meaningless in genus 0:
\begin{lem}
	If $D$ is any reduced divisor on $\P^1$ then any regular vector bundle is regular relative to $D$.
\end{lem}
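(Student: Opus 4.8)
The plan is to show that on $\P^1$, regularity already forces the stronger relative condition for free, essentially because every regular bundle on $\P^1$ is balanced with nonnegative slope in each summand. First I would recall that a regular bundle $E$ on $\P^1$ has $H^1(E)=0$, and since every bundle on $\P^1$ splits as $\bigoplus \O(a_i)$, regularity is exactly the condition that $a_i \geq -1$ for all $i$. The key structural fact I want to exploit is that the evaluation map $H^0(E)\to E_p$ at a single point is surjective precisely onto the summands with $a_i\geq 0$; more precisely, $\O(-1)$ summands contribute nothing to sections, so $H^0(E)$ maps onto the fiber of the subbundle $E^{+}=\bigoplus_{a_i\geq 0}\O(a_i)$.

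The main step is to verify the maximal-rank condition for the map $\rho_U:H^0(E)\to\bigoplus_{i=1}^k U_i$, where the $U_j$ are general rank-$1$ quotients of $E_{p_j}$ at distinct points $p_1,\dots,p_k\in D$. I would argue that because the $U_j$ are \emph{general} one-dimensional quotients, composing the evaluation at $p_j$ with a general projection to $U_j$ kills no section forced by the $\O(-1)$ part that could obstruct surjectivity; the only constraint comes from the dimension count $h^0(E)$ versus $k$. Concretely, for a balanced bundle $E = a\O(c)\oplus b\O(c-1)$ with $c\geq 0$, imposing a general one-dimensional quotient condition at each of $k$ distinct points amounts to imposing $k$ general linear conditions on the $\bigl(\sum(a_i+1)\bigr)$-dimensional space $H^0(E)$, and generality of the quotients guarantees these conditions are independent up to the obvious bound — which is exactly what maximal rank of $\rho_U$ means.

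I expect the main obstacle to be making precise the sense in which the genus-$0$ interpolation is automatic: one must check that a \emph{general} rank-$1$ quotient at each point really does impose an independent condition whenever the target dimension permits, i.e. that the twisted evaluation sheaf map has no unexpected base-point behaviour. The cleanest route is to reduce to the twisted-down bundle $E(-D_1)$ or to use the short exact sequence $0\to E(-\sum p_j)\to E\to \bigoplus_j E_{p_j}\to 0$ together with a vanishing for $H^1$ of the appropriate twist, and then observe that generality of the $U_j$ lets one pick splittings of each $E_{p_j}\to U_j$ compatibly. Since on $\P^1$ the relevant $H^1$ vanishings are controlled entirely by the slopes, and regularity guarantees $a_i\geq -1$, the condition reduces to a transversality statement for general linear projections, which holds by a standard openness-of-maximal-rank argument. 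Thus no genuinely new input beyond regularity is needed, confirming that the relative notion carries no extra content in genus $0$.
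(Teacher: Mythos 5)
Your outline contains a genuine gap, and it sits exactly at the point you flag as ``the main obstacle'': the claim that general rank-$1$ quotients at the points of $D_1$ impose independent conditions on $H^0(E)$ up to the obvious bound \emph{is} the lemma, and neither of your two proposed routes actually proves it. First, the functionals $\lambda_j\circ\mathrm{ev}_{p_j}$ are not general elements of $H^0(E)^\vee$: they are constrained to factor through evaluation at the \emph{fixed} points $p_1,\dots,p_k\in D$ (relative regularity fixes $D$; only the quotients $U_j$ are general), so ``$k$ general linear conditions are independent'' is circular as stated. Second, the repair via $0\to E(-\sum p_j)\to E\to\bigoplus_j E|_{p_j}\to 0$ and slope-controlled $H^1$-vanishing fails in precisely the interesting range: for $E=\O(1)\oplus\O$ and $k=3=h^0(E)$ one has $H^1(E(-p_1-p_2-p_3))=H^1(\O(-2)\oplus\O(-3))\neq 0$, so $H^0(E)$ does not surject onto the full fibers, and yet $\rho_U$ onto three general rank-$1$ quotients is an isomorphism. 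Maximal rank onto locally rank-$1$ quotients is strictly weaker than fiberwise surjectivity, and no twist of that sequence detects the difference. Finally, ``openness of maximal rank'' only shows the locus of good tuples $(U_1,\dots,U_k)$ is open in an irreducible parameter space; what is missing, and what you never supply, is nonemptiness, i.e.\ a single witness tuple. (A side error: your opening assertion that every regular bundle on $\P^1$ is balanced is false --- $\O(3)\oplus\O$ is regular --- though this is not where the argument breaks.)

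For comparison, the paper closes exactly this gap by induction on $h^0(E)$: writing $D=D'+p$, it takes a line subbundle $L\subset E$ of maximal degree, which for $h^0(E)>0$ has nonnegative degree and, because $U|_p$ is general, surjects onto $U|_p$; hence $H^0(E)\to U|_p$ is surjective with kernel $H^0(E')$, where $E'=\ker(E\to U|_p)$ is a general corank-$1$ down modification (again regular), and a two-row exact diagram plus the inductive hypothesis for $(E',D')$ gives maximal rank for $E$. Alternatively, your approach can be salvaged by exhibiting the witness directly: split $E=\bigoplus\O(a_i)$, discard the $\O(-1)$ summands (they carry no sections, and a general $\lambda_j$ restricts to a general functional on the fiber of the positive part), assign to the $i$-th summand $k_i$ of the points with $k_i\leq a_i+1$ when $k\leq h^0(E)$ (resp.\ $k_i=a_i+1$ when $k\geq h^0(E)$), take each $\lambda_j$ to be projection onto the assigned summand, and use that evaluation of $H^0(\O(a))$ at distinct points of $\P^1$ has maximal rank (a Vandermonde determinant). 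Either route supplies the interpolation input that your write-up asserts rather than proves.
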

\begin{proof} We use the above notation.
	The proof is by induction on $h^0(E)$ which  may be assumed $>0$. We may also assume
	$D_1=D$ is nontrivial and write $D=D'+p$. Let $L\subset E$ ge a line subbundle of maximal degree
	which we may assume surjects to $U|_p$ which is 1-dimensional. 
	Then letting $E'\subset E$ denote the kernel of
	$E\to U_p$, we get an exact diagram
	\[{
	\begin{matrix}
		0\to&H^0(E')&\to&H^0(E)&\to&\C_p&\to 0\\
		&\downarrow&&\downarrow&&\downarrow&\\
		0\to &U'&\to &U&\to&U|_p&\to 0
		\end{matrix}	
	}\]
As the right vertical arrow is an isomorphism and the left vertical arrow has maximal rank
by induction, it is easy to see that the middle vertical arrow likewise has maximal rank.
	\end{proof}
 In the higher-genus case relative regularity seems related to the property of
ultra-balancedness studied in \cite{elliptic}, but there is no implication either way.
Relative regularity is stronger in that the support of the quotient
is a  general divisor in $V$ which  may not be a general divisor on $C$; it is weaker in
that the quotient must have local rank 1.
\par The main result of this section is
\begin{prop}\label{regular-prop}
	For $X=\P^n, n\geq 3$ (resp. $X$ a general hypersurface of degree $n$ in $\P^n, n\geq 4$)
	and $g\geq 0$, there exists a curve of genus $g$ and degree $e\geq 2(g+1)n$ (resp. 
	$e\geq 2(g+1)n(n-1)$) in $X$ that is regular relative to $|-K_X|$.
	\end{prop}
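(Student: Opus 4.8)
The plan is to reduce the relative-regularity statement to a cohomology vanishing for elementary modifications of the normal bundle, to produce curves with the requisite balanced normal bundle from \cite{caudatenormal} and \cite{elliptic}, and then to control those modifications along an anticanonical divisor. First I record the numerics: on $X=\P^n$ one has $-K_X=\O(n+1)$ and $N=N_{C/\P^n}$ has rank $n-1$ and degree $(n+1)e+2g-2$, whereas on a general degree-$n$ hypersurface $-K_X=\O(1)$ and $N=N_{C/X}$ has rank $n-2$ and degree $e+2g-2$; in both cases the hypotheses on $e$ make $\deg N$ exceed $(\rk N)(2g-1)$, so a balanced $N$ automatically has $H^1(N)=0$. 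Thus the content is the maximal-rank assertion for the maps $\rho_U$, which I would phrase as follows. Given distinct $p_1,\dots,p_k\in C\cap D$ and general $1$-dimensional quotients $U_i$ of $N_{p_i}$, let $N'=\ker(N\to\bigoplus_i U_i)$. The resulting long exact cohomology sequence identifies the cokernel of $H^0(N)\to\bigoplus_i U_i$ with $H^1(N')$ and its kernel with $H^0(N')$, so $\rho_U$ has maximal rank if and only if $h^0(N')\,h^1(N')=0$; since $\chi(N')=\chi(N)-k=h^0(N)-k$, this is the requirement that $N'$ carry no ``overlap'' cohomology. Relative regularity is therefore exactly the vanishing $h^0(N')\,h^1(N')=0$ for every negative elementary modification $N'$ supported on $C\cap D$ with general $1$-dimensional quotients.

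For the construction I would induct on $g$, using the balanced curves of \cite{caudatenormal} and \cite{elliptic} as input. For $g=0$ one takes a balanced rational curve of the prescribed degree from \cite{caudatenormal}; for the inductive step I would attach an elliptic curve $E$ to a relatively regular curve $C'$ of genus $g-1$ at a node lying on a chosen general $D\in|-K_X|$, then smooth, propagating $H^1(N)=0$ and the vanishing above through the normal-bundle sequence of the nodal union --- precisely the device \S 1 is built to exploit. The degree cost is $\approx 2n$ (resp.\ $\approx 2n(n-1)$) per unit of genus, which accounts for the bounds $2(g+1)n$ and $2(g+1)n(n-1)$; the extra factor $n-1$ in the hypersurface case reflects the loss in $\deg N$ imposed by the sequence $0\to N_{C/X}\to N_{C/\P^n}\to\O_C(n)\to0$.

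The principal obstacle is that the support $\{p_i\}$ is constrained to the special locus $C\cap D$ rather than being general on $C$, so the vanishing $h^0(N')\,h^1(N')=0$ cannot be read off from ultra-balancedness alone. I would dispose of the small-$k$ range by observing that a member of $|-K_X|$ can be passed through any $k$ prescribed points of $C$ --- for the hypersurface whenever $k\le n$, and for $\P^n$ for vastly more --- so that in this range the $k$-subsets cut out by anticanonical divisors already exhaust the general $k$-subsets and the input curves apply verbatim. The genuinely constrained range, $k$ comparable to $h^0(N)$, is where the work lies: here I would adapt the inductive scheme of the genus-$0$ Lemma above, peeling off one point $p\in D$ together with a maximal-degree line subbundle of $N$ surjecting onto $U_p$ and passing to $\ker(N\to U_p)$ with the divisor $D-p$, arranging the positivity so that each step preserves $H^1=0$. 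Finally, openness of the maximal-rank condition in families would let me transport the vanishing from a reducible model of $C$ chosen compatibly with a fixed $D$ to the general smooth curve, completing the induction. I expect the control of $N'$ in this constrained regime, and not the production of the curves, to be the main difficulty.
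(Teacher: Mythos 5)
Your reduction of relative regularity to the vanishing $h^0(N')\,h^1(N')=0$ for negative elementary modifications $N'$ supported on $C\cap D$ is correct and matches how the notion is used in the paper, and your identification of the constrained support $C\cap D$ as the crux is also right. But that crux is exactly what your proposal does not resolve. The ``peeling'' induction you sketch for the constrained range is modelled on the paper's genus-$0$ lemma, whose proof depends essentially on the splitting of bundles on $\P^1$: there a maximal-degree line subbundle surjects onto $U_p$, and both $H^1=0$ for the kernel and the maximal-rank evaluation come for free from the structure theorem. On an irreducible curve of genus $g\geq 1$ there is no such structure theorem, $H^1(N)=0$ does not propagate to $\ker(N\to U_p)$, and ``arranging the positivity so that each step preserves $H^1=0$'' is not an argument but a restatement of the problem --- relative regularity is essentially equivalent to being able to perform every step of that peeling. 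You acknowledge this is the main difficulty, but the proposal leaves it open; and since (as the paper remarks) balancedness or ultra-balancedness of the input curves from the cited papers implies nothing about relative regularity, the inductive step as written has no base to stand on. That is a genuine gap.

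The paper's proof is organized precisely so that this difficulty is never confronted on a smooth positive-genus curve. For $X=\P^n$ it degenerates the \emph{ambient space} to a fang $P_1\cup P_2$ (two blowups glued along a common exceptional divisor) and takes the limit curve to be a nodal union all of whose components are rational with balanced normal bundles; the genus is created by a rational component meeting the double locus twice (the transform of a plane cubic nodal at the blown-up center), not by attaching an elliptic tail inside a fixed $X$. The limit anticanonical divisor is chosen as a union of limit hyperplanes compatible with the degeneration, so the constrained points distribute onto the rational components, where the genus-$0$ lemma gives the required vanishing componentwise (with twists at the connecting nodes, e.g. $H^0(N'|_{C_2}(-p-q))=0$); semicontinuity then transports relative regularity to the smoothed curve. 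For the degree-$n$ hypersurface the paper does not run a parallel intrinsic argument at all: it uses the quasi-cone degeneration to reduce to the already-proved statement for $\P^{n-1}$, with limit anticanonical divisors realized as transforms of degree-$n$ hypersurfaces through the center $Y$ of the blowup. If you wanted to salvage your attachment of elliptic tails, you would still need to degenerate the pair (curve, anticanonical divisor) into rational pieces to verify relative regularity of the union --- at which point you would have reconstructed the paper's argument.
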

\begin{proof}
	Case 1:  $X=\P^n$.\par
	We follow closely the proof of Theorem 29 in
	\cite{elliptic}, using induction on the genus $g$. Thus we will be
	consructing curves on a fang and smoothing them as the fang smooths to $\P^n$.
	As the case $g=0$ is automatic we begin with the case $g=1$.
	Thus let $\l$ be an integer in $[0,n-1]$ and 
	consider a fang of type $\l$ (see \S \ref{fan-sec})
	\[P_0=P_1\cup_{Q}P_2, Q:=\P^\l\times\P^{n-1-\l}\]
	where $P_1$ resp $P_2$ is the blowup of $\P^n$ in $\P^\l$ resp. $\P^{n-1-\l}$,
	with common exceptional divisor $Y=\P^\l\times\P^{n-1-\l}$. As $\P^n$ degenerates to $P_0$,
	hyperplanes have different types of limits depending on the dimension of the 
	limiting intersection with $\P^\l$.
	At one extreme, if the limiting intersection is transverse,
	 the limit in $P_0$ will have the form $H'_1\cup H'_2$
	where $H'_1$ is the birational transform of  a hyperplane in $\P^n$ transverse to $\P^\l$,
	 while $H'_2$
	is the birational transform  of a hyperplane containing $\P^{n-1-\l}$.
	This is the type we will use below.\par
	Thus let
	\[C_1\subset P_1, C_2\subset P_2\]
	be rational curves with the property that
	\[C_1\cap Q=C_2\cap Q=\{p,q\}.\] 
	Let
	\[C_0=C_1\cup_{p,q}C_2\subset P_0\]
	be a the resulting  curve of arithmetic genus 1 as in loc. cit. and set $e_i=\deg(C_i), i=1,2$ so 
	the degree of the smoothing of $C_0$ in $\P^n$ is $e=e_1+e_2-2$. As $C_0$ is a locally complete intersection
	in $P_0$, it has a locally free normal bundle denoted
	 $N_0=N_{C_0/P_0}$. Also set $ N_i=N_{C_i/\P_i}, i=1,2$. 
	 In \cite{elliptic}, Lemma 31 it is proven that we may assume $N_1, N_2$ are balanced.
	Then \[\chi(N_0)=(e_1+e_2-2)(n+1),\]
	\[\chi(N_1)=e_1(n+1)+(n-3)-2(n-\l-1), \chi(N_2)=e_2(n+1)+(n-3)-2\l.\]
	Now let $\l=[(n-1)/2]$ which makes $\chi(N_1)\leq e_1(n+1)$. Let
	\[H'_{1,1}\cup H'_{2,1},...,H'_{1,n+1}\cup H_{2,n+1}\] 
	be $n+1$ general hyperplane limits as above,	
		and let $D$ consist of $s=\chi(N_1)$ many points on $C_1\cap (H'_{1,1}\cup...\cup H'_{1,n+1})$ 
		plus $\chi(N)-s$ many
		points on $C_2\cap (H'_{2,1}\cup...\cup H'_{2, n+1})$, 
	and let $U$ be a general, locally rank $\leq 1$ quotient of $N_D$
		and let $N'$ be the kernel of $N\to U$. To prove $H^0(N)\to U$ has maximal rank
		we may assume $U$ has length $\chi(N)$, i.e. has rank exactly 1 at each point of $D$.
		Then we must prove $H^0(N')=0$.
		 Now because $N_1, N_2$
		are balanced we have $H^0(N'|_{C_1})=0$ and $H^0(N'_{C_2}(-p-q))=0$ , 
		hence $H^0(N')=0$. Thus proves that $C_0$ is regular relative to the limit 
		of $|(n+1)H|$ hence its smoothing in $\P^n$ is regular relative to $|(n+1)H|$.
		This proves our assertion in genus 1.\par
		In the general case we use induction on the genus based on a fan (i.e. fang of type
		$\l=0$) degeneration
		\[P_0=P_1\cup_E P_2, P_1=B_q\P^n, P_2=\P^n\]
		with $E\subset\P_1$ the exceptional divisor and $E\subset\P_2$ a hyperplane. We
		use a limit anticanonical divisor that is a union of limit hyperplanes of the form 
		\[D=\bigcup\limits_{i=1}^{n+1}H'_i, H'_i= H'_{1,i}\cup H'_{2,i}\] 
		with each $H'_{1,i}\subset P_1$ the
		birational transform of a hyperplane through $q$ and $H'_{2,i}\subset P_2$ a hyperplane
		with $H'_{1,i}.E=H'_{2,i}.E$. This is the opposite extreme of hyperplane limit
		types from the one used above. We consider a lci curve
		\[C_0=C_1\cup C_2\]
		where $C_2\subset P_2$ a $|-K_{P_2}|$-regular curve of genus $g-1$ and degree $e-1$
		and \[C_1= C_{1,1}\cup \bigcup\limits_{i=1}^{e-3} L_i\] 
		consists of the birational transforms of a plane cubic nodal at $q$ (so that
		$C_{1,1}.E=2$) plus lines through $q$. Then an argument similar to the above
		but simpler with $H'_{1,i}.L_j=0, \forall i,j$ shows that, for a locally-rank-1
		quotient $U$ of $N_0|_D$ the map $H^0(N_0)\to U$ is an isomorphism as required.
This concludes the proof in the case $X=\P^n$.\par
Case 2: $X\subset\P^n$ of degree $n$.\par
We use the usual 'quasi-cone' degeneration as in \cite{elliptic}, with the same notation:
\[X_0=X_1\cup_F X_2\subset P_1\cup_Q P_2.\]
Thus $P_1=B_q\P^n, P_2=\P^n$ and $Q=\P^{n-1}$ is embedded in $P_1$ as the exceptional divisor and
in $P_2$ as a hyperplane;  $X_1\subset P_1$ is the blow-up in $q$ of a general hypersurface of degree 
$n$ in $\P^n$ multiplicity $n-1$ at $q$, $X_2$ is a hypersurface of degree $n-1$ and
$F=X_1\cap Q=X_2\cap Q$.
Projection from $q$ realizes $X_1$ as the blow-up of $\P^{n-1}$ in a $(n-1,n)$ complete intersection
$Y$.	
As in loc. cit. we consider a curve
\[C_0=C_1\cup C_2\]
with $C_1\subset X_1$ the birational transform of a curve $C'_1$ of degree $e_1$ and genus
$g$ in $\P^{n-1}$ that is regular with respect to the anticanonical $|-K_{\P^{n-1}}|$,
and $C_2$ a disjoint union of lines with trivial normal bundle.
	Here as in loc. cit. $e=e_1n-a, a\leq n$. Now a limit anticanonical divisor
	on $X_0$ has the form $F_n'$ which is the birational transform of a hypersurface of degree
	$n$ in $\P^{n-1}$ containing $Y$ while $N_{C_1/X_1}$ is a general corank-1
	modification of $N_{C'_1/\P^{n-1}}$ at the $a$ points $\{p_1,...,p_a\}=C'_1\cap Y$.
		From this it is easy to see that $C_1$ is regular relative to 
		the divisor $F'_n.C_1$ and hence that
		$C_0$ is regular relative to $F'_n.C_0$. Therefore 
		as $C_0$ smooths to a curve $C$ on a general
		degree-$n$ hypersurface $X$, $C$ is regular relative to $|-K_X|$.	
	\end{proof}

\section{Lines and conics on some hypersurfaces}
In this section we will study some rational curves that will
serve as 'tails' in the construction of good curves as in Theorem \ref{main-thm}.
First, an easy remark about their normal bundle:
\begin{lem}\label{line-lem}
Let $X$ be a general hypersurface of degree $d$ in $\P^n$.\par
(i) If $d\leq 2n-3$, $X$ contains a line
	with balanced normal bundle .\par
	(ii) If $d\leq(3n-2)/2$, $X$ contains a conic with balanced normal bundle.
	\end{lem}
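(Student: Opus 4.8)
The plan is to read off both normal bundles from the standard normal-bundle sequence and to reduce balancedness to a genericity statement about kernels of bundle maps on $\P^1$, the degree bounds entering only through the existence of the curve on a general hypersurface. For a line $L\subset X$ one has $N_{L/\P^n}=(n-1)\O(1)$ and
\[0\to N_{L/X}\to (n-1)\O(1)\xrightarrow{\psi}\O_L(d)\to 0,\]
so that, after twisting by $\O(-1)$, $N_{L/X}(-1)$ is the kernel of a map $(n-1)\O\to\O(d-1)$ whose components are precisely the coefficients of the part of a defining equation $f$ that is linear in the normal directions, restricted to $L$. Since these coefficients may be prescribed freely, for general $f$ vanishing on $L$ the map is given by $n-1$ general sections of $\O(d-1)$; such a map is surjective (equivalently $X$ is smooth along $L$) and its kernel is balanced, by the standard fact that the kernel of a general surjection of bundles onto a line bundle on $\P^1$ has balanced splitting type. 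Hence $N_{L/X}$ is balanced for the general hypersurface through a fixed $L$.

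For a conic $C$, which spans a plane $\P^2\subset\P^n$, I would first compute $N_{C/\P^n}$ from $0\to N_{C/\P^2}\to N_{C/\P^n}\to N_{\P^2/\P^n}|_C\to 0$: here $N_{C/\P^2}=\O(4)$ and $N_{\P^2/\P^n}|_C=(n-2)\O(2)$, and since $\Ext^1((n-2)\O(2),\O(4))=H^1(\O(2))^{n-2}=0$ the sequence splits, giving $N_{C/\P^n}=\O(4)\oplus(n-2)\O(2)$. Then $N_{C/X}=\ker(\psi)$ for a map $\psi\colon \O(4)\oplus(n-2)\O(2)\to\O_C(2d)$, and for a general hypersurface through $C$ (with $d\ge 2$, so that $2d-4\ge 0$ and every summand can map nontrivially) $\psi$ is a general such map; its kernel is again balanced by the same genericity principle.

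The degree hypotheses are exactly the conditions under which the general hypersurface contains the curve, which I would verify by an incidence-variety count. For lines, $I=\{(L,X):L\subset X\}$ fibers over $\gr(2,n+1)$ in projective spaces, is therefore irreducible, and has $\dim I=N+2n-3-d$ where $N=\dim|\O_{\P^n}(d)|$; the projection to $|\O_{\P^n}(d)|$ is dominant precisely when $d\le 2n-3$. For conics the analogous count over the $(3n-1)$-dimensional family of conics gives $\dim J=N+3n-2-2d$, dominant precisely when $d\le(3n-2)/2$. Since the locus where the normal bundle is balanced is open and, by the previous two paragraphs, nonempty, on the irreducible incidence variety it is dense, so its image dominates $|\O_{\P^n}(d)|$; hence the general $X$ contains a line (resp. conic) with balanced normal bundle.

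The step I expect to be the main obstacle is the genericity input in the conic case: one must check that a general degree-$d$ form vanishing on $C$ induces a genuinely general map $\psi$, i.e. that the evaluation map from $\{f:f|_C=0\}$ to $\Hom(\O(4),\O(2d))\oplus\Hom((n-2)\O(2),\O(2d))$ is surjective, so that the $\O(4)$-summand really maps by a general section of $\O(2d-4)$ rather than being forced into the kernel. For lines this realizability is transparent; for conics the mixed splitting type of $N_{C/\P^n}$ makes it the delicate point, since an unbalanced map would produce an unbalanced kernel, and one must confirm that generic surjectivity onto $\O(2d)$ together with the bound $d\le(3n-2)/2$ (which keeps $\rk N_{C/X}$ and $\deg N_{C/X}=2n-2d$ compatible with a balanced type) yields the balanced kernel.
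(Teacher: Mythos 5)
Your strategy is the same as the paper's: realize the normal bundle as the kernel of the map $N_{C/\P^n}\to\O_C(d\cdot\deg C)$ cut out by a general hypersurface through the curve, check that this map can be taken general, and conclude balancedness of the kernel by a genericity/openness argument (your incidence count is the standard dominance step, left implicit in the paper). The line case is sound: there the source is $(n-1)\O(1)$, and the fact you quote --- that the syzygy bundle of $n-1$ general forms on $\P^1$ is balanced --- is indeed standard. The genuine gap is in the conic case, precisely at the words ``by the same genericity principle.'' For an unbalanced source that principle is \emph{false}: for example, a general surjection $2\O(2)\oplus\O\to\O(2)$ has every component nonzero, yet its kernel is $\O(2)\oplus\O$, not $2\O(1)$; a general surjection $2\O(1)\oplus\O(-5)\to\O(2)$ has kernel $\O\oplus\O(-5)$, even though the target degree exceeds every source degree; and inside the lemma's own stated range, a general surjection $\O(4)\oplus(n-2)\O(2)\to\O(2)$ (your $\psi$ when $d=1$) has kernel $\O(4)\oplus(n-3)\O(2)$ --- which incidentally shows part (ii) genuinely requires $d\geq 2$, so no degree-free principle can possibly close this step. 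What is needed, and what the paper actually supplies, is a degree-specific argument: let $E$ be the balanced bundle of rank $n-2$ and degree $2n-2d$; for $2\le d\le (3n-2)/2$ every summand of $E$ has degree at most $2$, so $E^\vee\otimes\bigl(\O(4)\oplus(n-2)\O(2)\bigr)$ is globally generated and a general sheaf map $E\to\O(4)\oplus(n-2)\O(2)$ is fibrewise injective (its degeneracy locus has expected codimension $2$, hence is empty on $\P^1$); its cokernel is then a line bundle, which for degree reasons is $\O(2d)$; finally, since balancedness is open and the space of surjections is irreducible, the kernel of a \emph{general} surjection is balanced. Without this (or an equivalent) verification, your conic case is an assertion, not a proof.

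By contrast, the step you single out as ``the main obstacle'' --- that a general $f$ vanishing on $C$ induces a genuinely general $\psi$ --- is the routine part. Writing $C=V(q,x_3,\dots,x_n)$ with plane $\Pi=V(x_3,\dots,x_n)$, any $f$ containing $C$ has the form $Aq+\sum_{i\geq 3}x_iB_i$, and the components of $\psi$ are $A|_C\in H^0(\O_{\P^1}(2d-4))$ and $B_i|_C\in H^0(\O_{\P^1}(2d-2))$. So the evaluation map you worry about is a direct sum of restriction maps $H^0(\Pi,\O(d-2))\to H^0(\O_{\P^1}(2d-4))$ and $H^0(\P^n,\O(d-1))\to H^0(\O_{\P^1}(2d-2))$, and these are surjective because $H^1(\P^2,\O(d-4))=H^1(\P^2,\O(d-3))=0$ (the conic is projectively normal). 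Hence \emph{every} $\psi$ is realized by a hypersurface through $C$, and the difficulty sits exactly where you thought it did not: not in realizability, but in proving that a general surjection from the unbalanced bundle $\O(4)\oplus(n-2)\O(2)$ has balanced kernel.
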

\begin{proof}
	Line case: Let $L$ be the line $V(x_2,...,x_n)$ in $\P^n$. A hypersurface $X$ of degree $d$ containing
	$L$ has equation $\sum\limits_{i=2}^nx_if_i(x_0, x_1), \deg(f_i)=d-1$. The normal
	sequence reads
	\[\exseq{N_{L/X}}{(n-1)\O_L(1)}{\O_L(d)}\]
		where the right map is $(f_2,...,f_n)$ which is a general map. 
		On the other hand, let $E=(n-1-d)\O_L(1)\oplus(d-1)\O_L$ be the unique balanced bundle of rank $n-2$
		and degree $n-1-d$ on $L$. Then there clearly exists a fibrewise injection
		$E\to(n-1)\O_L(1)$ whose cokernel, for degree reasons, must be $\O_L(d)$.
		By openness of the balancedness property (i.e. the fact that a balanced bundle is rigid), 
		it follows that $N\simeq E$.\par
		The conic case is similar (note that in that case the middle term in the
		normal sequence is $\O(4)\oplus (n-2)\O(2)$ which is not balanced, but
		this doesn't matter).
		
	\end{proof}
\begin{rem}\label{line-rem}
	In case $n-1\leq d\leq 2n-3$ above, the normal bundle $N=N_{L/X}$ is $N=(2n-d-3)\O\oplus(d-n+1)\O(-1)$ for the line.
	In the case $n\leq d\leq (3n-2)/2$ we have 
	 $N=(3n-2d-2)\O\oplus (2d-2n)\O(-1)$ for the conic. 
	 The argument doesn't extend to higher-degree rational curves
	 because they are not complete intersections.
	\end{rem}
/******
**************/

Let $P$ denote the blow-up of $\P^n$ at a point $q$. By a \emph{resolved quasi-cone}
in $P$ we mean the birational transform $X\subset P$ of  a quasi-cone, i.e. a
 hypersurface $\bar X$ of degree $d$ in $\P^n$
having multiplicity $d-1$ at $q$, the quasi-vertex. 
Projection from $q$ realizes $X$ as the blow-up
of $\P^n$ in a $(d, d-1)$ complete intersection curve $Y$ so that the exceptional
divisor corresponds to the unique degree-$(d-1)$ hypersurface containing $Y$, while
 hyperplane sections from $\P^n$
correspond to hypersurfaces of degree $d$ containing $Y$. In particular, taking
$d=n+1$, $(n-1)$-secant lines to $Y$ in $\P^{n-1}$ correspond to conics in $\bar X$ 
 trough the quasi-vertex  $q$.\par
 Before stating the next result we recall that for a balanced bundle
 $a\O(m+1)\oplus b\O(m), a>0$ on $\P^1$ the upper subbundle is by definition
 $a\O(m+1)$ and the upper subspace at a point $p$ is the fibre of the upper subbundle at $p$.
 \begin{lem}\label{lines-lem}
Let $X$ be either\par Case (a): a general, degree-$n$ hypersurface in $\P^n, n\geq 4$,
or\par Case (b): a general resolved quasi-cone of degree $(n+1)$  in $P$.\par
 Let $L\subset X$ 
be either\par Case (a): a general line,  or\par Case  (b): the transform of a general conic through the
quasi-vertex. \par
 Let $Z$ be either\par Case  (a):
$Z=X\cap H$ a general hyperplane section from $\P^n$, or\par Case  (b): the exceptional
divisor; and let $p=Z.L$. \par
Then:\par
(i) We have $N:=N_{L/X}=(n-3)\O\oplus\O(-1)$.\par
(ii) Varying $X$ with fixed $L$ and $Z$, 
and identifying $N|_p\simeq T_pZ$, the upper
subspace $(n-3)\O|_p\subset T_pZ$ becomes a general hyperplane.
\par (iii) There is a deformation $\{(X_t, L_t):t\in T\}$, fixing $Z$, such that 
the image of the map
\[T\ni t\mapsto L_t\cap Z\]
contains a neighborhood of $p$. 	
	\end{lem}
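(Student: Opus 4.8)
The plan is to treat cases (a) and (b) in parallel through the normal-bundle sequence, and to reduce (ii) and (iii) to first-order computations of how the balanced splitting and the intersection point $p$ move as $X$, respectively $(X,L)$, vary.

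For (i), in case (a) the bundle is computed exactly as in Lemma \ref{line-lem}: writing $L=V(x_2,\dots,x_n)$ and the degree-$n$ equation as $\sum_{i\ge 2}x_if_i$, the normal sequence
\[\exseq{N_{L/X}}{(n-1)\O_L(1)}{\O_L(n)}\]
has right-hand map $(\bar f_2,\dots,\bar f_n)$ with $\bar f_i=f_i|_L$, and Remark \ref{line-rem} with $d=n$ gives $N=(n-3)\O\oplus\O(-1)$. In case (b) I would pass to the projection model: $X$ is the blow-up of $\P^{n-1}$ along the codimension-two complete intersection $Y=V(g_n,g_{n+1})$, $Z$ is the strict transform of the tangent cone $V(g_n)$, and $L$ is the strict transform of an $(n-1)$-secant line $\ell$ to $Y$, so that $p=L\cap Z$ is the residual point of $\ell\cap V(g_n)$ not lying on $Y$. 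Then $N_{\ell/\P^{n-1}}=(n-2)\O(1)$, and taking the strict transform performs one negative elementary modification at each of the $n-1$ transversal points of $\ell\cap Y$, dropping the degree from $n-2$ to $-1$ while keeping the rank $n-2$. For general $\ell$ and general $g_n,g_{n+1}$ these modifications are general, so the bundle balances; concretely one checks $H^1(N_{L/X})=0$ and $H^0(N_{L/X}(-1))=0$, which for a degree $-1$, rank $n-2$ bundle forces the splitting $(n-3)\O\oplus\O(-1)$.

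For (ii), note first that $T_pZ\cong N_{L/X}|_p$ because $L$ meets $Z$ transversally, so $T_pL\oplus T_pZ=T_pX$. Fixing $L$ and $Z$, the hypersurfaces through both are, in case (a), exactly $V(\tilde Z+hG)$ with $h$ the equation of $H$ and $G$ a degree-$(n-1)$ form with $G|_L$ prescribed; the free part of $G$ lies in $I_L$, so the deformation acts by $\bar f_i\mapsto \bar f_i+\bar h\bar\gamma_i$ with $\bar h=h|_L$ vanishing at $p$. Because $\bar h(p)=0$, the fibre $N|_p=\ker(\bar f_i(p))$ — hence $T_pZ$ — is unchanged, while the upper-subbundle fibre $U|_p=\mathrm{ev}_p\,H^0(N_{L/X})$ does move. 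Writing $H^0(N_{L/X})$ as the pairs $(v_i)$ of linear forms with $\sum \bar f_iv_i=0$ and evaluating at $p$, the hyperplane $U|_p\subset N|_p$ is cut by a single linear functional built from the $\bar f_i$; I would then compute the derivative of $\bar\gamma\mapsto U|_p$ and show it surjects onto the tangent space $\Hom(U|_p,\,N|_p/U|_p)$ of the space of hyperplanes in $N|_p$, i.e.\ dominance of $\bar\gamma\mapsto U|_p$ onto $\P((T_pZ)^\vee)$. This surjectivity is the technical heart of the lemma and the step I expect to be the main obstacle; the case (b) analogue is the same computation for deformations of $g_{n+1}$ fixing $g_n$ and the $n-1$ secant points.

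For (iii), I would deform the line rather than fix it. Rotating $L$ to a nearby line $L_t$ moves the point $L_t\cap H$ freely over $H$; restricting to those $L_t$ with $L_t\cap H\in Z$ (one condition) still maps dominantly onto a neighborhood of $p$ in $Z$, since $Z$ has codimension one in $H$ and the rotation directions at $p$ surject onto $T_pZ$. For each such $L_t$ there is a degree-$n$ hypersurface $X_t=V(\tilde Z+hG_t)$ with $X_t\cap H=Z$ and $L_t\subset X_t$: existence of $G_t$ amounts to divisibility of $\tilde Z|_{L_t}$ by $h|_{L_t}$, which holds precisely because $L_t\cap H$ was chosen on $Z$. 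Thus $\{(X_t,L_t)\}$ is the required family and $t\mapsto L_t\cap Z=L_t\cap H$ covers a neighborhood of $p$; the quasi-cone case is identical, moving the secant line $\ell_t$ so its residual point on $V(g_n)$ sweeps $Z$. I note that (iii) dovetails with (ii): deforming $L$ inside a \emph{fixed} $X$ moves $p$ only within $U|_p$ (as $\mathrm{ev}_p\,H^0(N_{L/X})=U|_p$), so the freedom to move $X$ quantified by (ii) is exactly what supplies the missing transverse direction in $T_pZ$.
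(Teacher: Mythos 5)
Your parts (i) and (iii) are sound, and (iii) is genuinely different from the paper's route. For (i) you reproduce the paper's argument in both cases (Lemma \ref{line-lem} with $d=n$ for the line; for the quasi-cone, the strict transform performs $n-1$ general negative elementary modifications of $N_{\ell/\P^{n-1}}=(n-2)\O(1)$, forcing the balanced splitting). For (iii) the paper \emph{deduces} the statement from (ii): the limit lines trace out divisors $D(g)$ whose tangent hyperplane at $p$ is the upper fibre, so as $g$ varies these divisors sweep out an open set. You instead move the line first, subject to $L_t\cap H\in Z$, and then solve for a hypersurface $X_t\supset L_t$ with $X_t\cap H=Z$, using that $h|_{L_t}$ divides $F|_{L_t}$ exactly when $L_t\cap H\in Z$. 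That argument is correct, more elementary, and --- importantly --- independent of (ii), which is where your proposal breaks down.

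The genuine gap is in (ii). You correctly reduce it to dominance of the map $\bar\gamma\mapsto U|_p$ onto the space of hyperplanes in $T_pZ$ (equivalently, surjectivity of its derivative onto $\Hom(U|_p,N|_p/U|_p)$), but you then only announce that you ``would compute the derivative and show it surjects,'' yourself calling this the technical heart and the main obstacle. It is indeed the heart, and it is not done: carrying it out directly on a smooth hypersurface means controlling how the space of syzygies $(v_i)$ with $\sum(\bar f_i+\bar h\bar\gamma_i)v_i=0$, and its evaluation at $p$, varies with $\bar\gamma$ --- a nontrivial computation with generic binary forms that your proposal does not attempt. The paper sidesteps exactly this difficulty by degenerating $X$, within the family of hypersurfaces with fixed $H$-section $Z$, to the simplex $X_0=V(x_1\cdots x_n)$ and passing to the small resolution $X'_0$ of the total space of the pencil $V(f_0+tx_0g)$: there $N_{L/X'_0}$ becomes an explicit corank-one down modification of $N_{L/H_1}=(n-2)\O_L(1)$ at $p_2,\dots,p_n$, with modification data the tangent hyperplanes $T_{p_j}Q_j$, $Q_j=V(g)\cap S_{1j}$, which can be prescribed freely by choosing $g$; a short computation in the basis $e_2,\dots,e_{n-1}$ (the upper subsheaf is spanned by $\alpha_3e_2-\alpha_2e_3,\dots,\alpha_{n-1}e_{n-2}-\alpha_{n-2}e_{n-1}$) then shows the upper fibre at $p$ moves with the data. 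Some such mechanism --- either this degeneration or an actual syzygy computation --- is required; without it (ii) is unproven. Note also that your case (b) treatment of (ii) defers to ``the same computation,'' so it inherits the gap, although there the normal bundle is already an explicit modification and only the final basis computation (the one the paper writes out) is missing.
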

\begin{rem}
 From the proof below
it will follow that $X$ actually contains a 1-parameter family of such curves.
	\end{rem}
\begin{proof}[Proof of Lemma] Note that all our assertions are open in $L$  $Z$ and $X$,
so it suffices to prove them for some special case.\par
Case (a):\par
	(i) This is just Lemma \ref{line-lem} above. Note that it implies that $L$ moves on $X$
	in a smooth $(n-3)$-dimensional family and, because the restriction map
	$H^0(N)\to N|_p$ for general $p\in L$ has $(n-3)$-dimensional image, this family fills
	up an $(n-2)$-dimensional scroll that is a divisor on $X$.\par
	(ii) Let $x_0$ be the equation of $H$ and $x_1,...,x_n$ be general linear forms, and
	consider the case of a simplex
	\[X_0=H_1\cup...\cup H_n, H_i=V(x_i)\]
	and let $A_1\cup...\cup A_n=X_0\cap H$ be its $H$-section.
	$X_0$ has singular locus
	\[S=\bigcup S_{ij}, S_{ij}:=H_i\cap H_j, 1\leq i<j\leq n.\]
	Let $L\subset H_1$ be a general line and 
	\[p_{j}=L. H_j=L. S_{1j}, j=2,...,n; p=L. H=L.A_1.\]
Set $f_0=x_1\cdots x_n$, let $g$ be a general degree-$(n-1)$ form vanishing at $p_2,..., p_n$,
	and let $X_1\subset\P^n$ be the hypersurface with equation $f_0+x_0g$. Thus $X_1$ contains
	$p_2, ..., p_n$ and because $x_0$ is the equation of $H$ in $\P^n$, one has
	$X_1\cap H=X_0\cap H=Z$ and this has equation $f_0$ in $H$.
			Set $Q_j=V(g)\cap S_{1j}, j=2,...,n$, which is a degree-$(n-1)$ subvariety in $S_{1j}$
			containing $p_j$,
			and $X_1\cap S_{1j}$ consists of the hyperplane $V(x_0)$ plus $Q_j$.
			Let $t$ be a coordinate on $\A^1$ and
		consider the linear family, depending on $g$
		\[\pi=\pi(g): \mathcal X(g)=V(f_0+tx_0g)\subset\P^n\times\A^1\to\A^1, X_t=\pi\inv(t).\]
		This is a pencil of hypersurfaces with fixed $H$-section $Z$. 
		Then $\mathcal X(g)$ is singular at $S\cap X_1$ and, 
		away from  $S_0=S\cap V(x_0, g)$,
		$\mathcal X_g$ has singularity of type (3-fold ordinary double point)$\times \A^{n-3}$ and so admits
		a small resolution $\mathcal X'\to\A^1$ with fibre $X'_0$. Note
		that $S_0$ also coincides with the singular locus of a general fibre $X_t$. \par
		The normal
		bundle $N_{L/X_0'}$ is a corank-1 down modification (i.e. subsheaf of colength 1)
		of $N_{L/H_1}$ at
		$p_{2}, ..., p_{n}$. Identifying $N_{L/H_1}|_{p_{j}}\simeq T_{p_{j}}S_{1j}$,
		this is the down modification corresponding to the subspace $T_{p_{j}}Q_j, j=2,..., n$.
		Since these subspaces may be chosen generally it follows firstly  that the modification is general,
		so that $N_{L/X'_0}=(n-3)\O\oplus\O(-1)$. Moreover clearly, and as one can check by a coordinate
		computation, as the hyperplanes  $T_{p_{j}}Q_j, j=2,..., n$ vary, so does the 
		$(n-3)\O$ subsheaf 
		and its fibre at $p$. Explicitly, write $N_{L/H_1}=L_2\oplus...\oplus L_{n-1}$ where $L_i\simeq \O(1)$
		and the fibre of $L_i$ at $p_i$ corresponds to $T_{p_i}Q_i, i=2,...,n-1$. 
		Then the down modification corresponding to $T_{p_i}Q_i, i=2,...,n-1$ replaces each $L_i$
		by $\O_L$ so it is just $(n-2)\O_L$ with basis $e_2,...,e_{n-1}$.
		Then if the hyperplane $T_{p_n}Q_n$ is represented by $(\alpha_2,...\alpha_{n-1})$ in
		this basis, then the $(n-3)\O$ subsheaf is generated
		by $\alpha_3e_2-\alpha_2e_3,...,\alpha_{n-1}e_{n-2}-\alpha_{n-2}e_{n-1}$ 
		and this clearly moves with $T_{p_n}Q_n$.
		\par
		(iii) What (ii) shows is that the set of limit lines in the family $X(g)$ is
		a smooth $(n-3)$-parameter family which traces out on $A_1$ a smooth divisor $D(g)$ whose tangent 
		hyperplane at $p$
		corresponds to the aforementioned $(n-3)\O$ subsheaf. This depends on $g$ through the $Q_i$ curves.
		As $g$ varies, the latter computation shows that $D(g)$ will vary and with it the tangent
		hyperplane $T_{p}D(g)$. Therefore the divisors $D(g)$ will sweep out $A_1$, filling up an open set.\par
		Case (b):\par
		It is convenient to
		start with a line $L\subset\P^{n-1}$ with $n-1$ distinct points $p_1,...,p_{n-1}$,
		then let $F_{n}, F_{n+1}$ be general hypersurfaces of the indicated degrees containing
		$p_1,...,p_{n-1}$, then let $Y=F_n\cap F_{n+1}$ and $P=B_Y\P^{n-1}$. 
		Then $N_{L/P}$ is the down modification of $N_{L/\P^{n-1}}=(n-2)\O(1)$ in the rank-1 quotients
			corresponding to $T_{p_i}Y, i=1,...,n-1$.
		Because these tangent spaces are general, we have $N_{L/P}=(n-3)\O\oplus\O(-1)$.
		Then because we can vary $Y$ while fixing $Z=F_{n-1}$,
		  assertions (ii-iii) are clear.
	\end{proof}
\section{Conclusions}\label{proof-sec}
\begin{proof}[Proof of Theorem \ref{main-thm}] 
\hskip1cm

 \ul{$(2,n)$ complete intersection case:}\par
 Assume first $e=2e_1$ even. We first prove assertion (ii), so we assume $g=1$ or $n=4$. 
Let \[X_0=X_1\cup X_2\subset \P^{n+1}\]
where  $X_q, X_2$ are general degree-$n$ hypersurfaces in respective hyperplanes $P_1, P_2\subset\P^{n+1}$ 
with the property  that
\[X_1\cap P_1\cap P_2=X_2\cap P_1\cap P_2=:Z.\] 
We may assume $Z$ is a general hyperplane section of $X_1$ and $X_2$.
A smoothing of $X_0$ is given by a smoothing of the reducible quadric $P_1\cup P_2$,
and has total space that is singular along a divisor $Z_q\subset Z$.
\par
Let $C_1\subset X_1$ be a curve of degree $e_1=e/2$ and genus $g$, regular
with respect to $|\O_{X_1}(1)|=|-K_{X_1}|$ (cf. Proposition \ref{regular-prop}), and
meeting $Z$ transversely in $p_1,...,p_{e_1}$ and disjoint from $Z_q$. 
As $(g-1)(n-3)=0$ and $N_{C_1/X_1}$ is balanced, 
$C_1$ moves in a smooth family of dimension  $e_1+n-5=\chi(N_{C_1/X_1})$
on $X_1$. 
Now because
\[e_1-2=e_1+n-4-(n-2)=\chi(N_{C_1/X_1}(-p_i))>0,\] hence $H^1(N_{C_1/X_1}(-p_i)=0$, so each $p_i$ moves on $Z$
filling up an analytic open set $U_i$. By restricting, we may assume the $U_i$
pairwise disjoint. On the other hand consider a balanced line $L\subset X_2$ with normal bundle
$(n-3)\O\oplus\O(-1)$.
As we saw in Lemma \ref{lines-lem}, as the pair $(X_2, L)$ moves while fixing $Z$,
 the point $L\cap Z$ moves, filling up a Zariski open set $V$ (NB it is obviously
 necessary here that the hypersurface move together with the line).
As $V$ must intersect each $U_i$, we may assume that we have a balanced line line $L_i$ in $X_2$
through $p_i$ for $i=1,...,e_1$. Moreover by Lemma \ref{lines-lem}, Case (a),
 the may assume the upper subspace $M_i$ of $N_{L_i/X_2}|_{p_i}$
is general as subspace of $T_{p_i}Z$. Let $N_1\subset N_{C_1/X_1}$ be the down modification
corresponding to $M_1,...,M_{e_1}$ and $N_0=N_{C_0/X_0}$. \par
Now recall we are assuming either $g=1$ or $n=4$.
Then because $N_{C_1/X_1}$ is regular relative to $|\O(1)|$ and the modification
is general, we have $H^0(N_{1})=0$.  let
\[C_2=\bigcup\limits_{i=1}^{e_1} L_i, C_0=C_1\cup C_2.\]
Now note that $H^0(N_{C_0/X_0})=H^0(N_1)$ and as we have seen this vanishes. 
Thanks to our assumption that either $n=4$ or $g=1$, we have $\chi(N_{C_0/X_0})=0$
so it follows that $H^1(N_{C_0/X_0})=0$ as well.
Thus, $C_0$ is rigid on $X_0$
and deforms with it to a rigid curve of  degree $2e_1=e$ on a general 
$(2,n)$ complete intersection in $\P^n$. This completes the proof of assertion (ii) $e$ is even.
\par
As for Assertion (i), i.e. the case $g=0$, still assuming $e$ is even,
the argument is  is similar but simpler. Using the same notation, note that it suffices to prove $h^0(N_0)=n-4$. 
We have an exact sequence
\[0\to H^0(N_0)\to H^0(N_{C_1/X_1})\oplus H^0(N_{C_2/X_2})\stackrel{\rho}{\to}\bigoplus\limits_{i=1}^{e_1} M_i\]
By Proposition \ref{regular-prop}, we may assume $C_1\subset X_1$ is regular with respect to $\O(1)$. Then because
the $M_i$ are general  hyperplanes in $N_{C_1/X_1}|_{p_i}$ it follows that 
\[h^0(N_0)=h^0(N_{C_1/X_1})-e_1=n-4.\]

 ******
************* \par
Now consider the case $e=2e_1+1$ odd. The idea is to use the same $C_1$ of degree $e_1$ and to
 replace one of the lines, say $L_1$,
by a suitable conic $M$. Now recall that the smoothing of $X_0$ corresponds to smoothing
the reducible quadric with equation $x_1x_2$ to one with equation $q$. The total space of
the family has local equation $x_1x_2+tq$. As such it is singular in $x_1=x_2=t=q=0$,
i.e. the intersection $Z_q$ of $Z$, the double locus of the special fibre,  
with the quadric $q=0$. There the total space admits
a small resolution $\tilde{\mathcal X}$
 by blowing up $x_1=q_0$ (this makes sense globally) 
 and the special fibre in    $\tilde{\mathcal X}$ replaces the component $X_2$
by its blowup in $Z_q$. Choosing the quadratic  $q$ suitably, we can arrange that $M$ meets $Z_q$
in exactly 1 point and there transversely.
Then the birational transform of $M$ meets that of $Z$ in exactly 1 point and has
normal bundle $(n-3)\O\oplus\O(-1)$ by Lemma \ref{line-lem}, so we can proceed as before, for both
assertion (i) and (ii).

\ul{Hypersurface Case :}\par
Suppose $e$ is even. Here we use a standard quasi-cone degeneration to
\[X_0=X_1\cup_Z X_2\]
with $X_1$ a resolved  quasi-cone of degree $n+1$ and $X_2$ 
is a hypersurface of degree $n$ in $\P^n$. The degeneration has smooth total space. 
As in the $(2,n)$ case, we have
a lci curve
\[C_0=C_1\cup C_2\]
with $C_1$ a disjoint union of $e_1=e/2$ many 'conics' with normal bundle $(n-3)\O\oplus\O(-1)$
as in Lemma \ref{lines-lem}, Case (b), and $C_2$ is a curve of genus $g$ and degree $e_1=e/2$
on $X_2$ that is regular relative to $|-K_{X_2}|=|\O_{X_2}(1)|$, and with
\[C_1\cap Z=C_2\cap Z.\] Then an argument as above
shows that $H^0(N_{C_0/X_0})=0$ so we can conclude as above. \par
Finally in case $e$ is odd we replace one of the 'conics' by a 'twisted cubic'. This
is obtained by starting with a conic $M\subset\P^{n-1}$ with $2n-1$
points $p_1,...,p_{2n-1}$, choosing general hypersurfaces $F_n, F_{n+1}$ through $p_1,...,p_{2n-1}$
and blowing up $Y=F_n\cap F_{n+1}$. The birational transform $\tilde M$ of $M$
 meets $Z=\tilde F_n$ in 1 point and contributes 2 to the total degree of $C_0$ and its smoothing.
 Even though the $M$ has normal bundle $N=\O(4)\oplus(n-3)\O(2)$
 which is unbalanced, the down modification of $N$ in $>0$ points is balanced
and so $\tilde M$   has balanced normal bundle i.e. $(n-2)\O\oplus\O(-1)$.
\end{proof}
\begin{rem}
	It is proved in \cite{voisin-e} that the general sextic in $\P^4$ contains no rational or elliptic curves,
	thus showing that Theorem \ref{gt-thm} is sharp.
	\end{rem}
\begin{rem}\label{filling}
	Let $C$ be a rational curve on a CICY $X$ as in Theorem \ref{main-thm}
	with normal bundle $N=(n-4)\O_C\oplus2\O_C(-1)$.  Then for $p\in C$ the image of restriction
	$H^0(N)\to N|_p$ is $(n-4)$-dimensional. It follows that 
	 $C$ moves in $X$ filling up an $(n-3)$-dimensional ruled subvariety birational to a fibration
	 with fibre $\P^1$, i.e. either a $\P^1$-bundle ($e$ odd) or a conic bundle ($e$ even).
\par	 For example, for $n=5$ and $X\subset\P^5$ a sextic, such a rational curve fills up
	 a surface. When $C$ is a line, the degree of the surface is easily computed
	 to be $d=c_7(\sym^6(Q)).c_1(Q)$ where $Q$ is the tautological rank-2 bundle
	 on the Grassmannian $\gG(1, 5)$. Evaluating this number
	 is a routine
	 if tedious calculation (which we have undertaken at the referee's behest). 
	 If $r_1, r_2$ are the Chern roots of $Q$ then those of $\Sym^6(Q)$ are
	 $6r_1, 5r_1+r_1,...,6r_2$, hence
	 \[c_7(Q)=4320r_1r_2(r_1^5+r_2^5)+37584 r_1^2r_2^2(r_1^3+r_2^3)+98064 r_1^3r_2^3(r_1+r_2)\]
	 hence, where $c_i=c_i(Q)$,
	 \[d=4320c_1^6c_2+15984c_1^4+6912c_1^2c_2^3\]
	 The Chern numbers is question are, respectively,  $8!144=5806080, 2, 1$
	 hence
	 \[d=25082304480.\]
	\end{rem}
\begin{rem}
	If $C$ is as in Remark \ref{filling} then clearly $T_X|_C=\O(2)\oplus(n-2)\O\oplus\O(-1)$
	which is not balanced. Thus in the terminology of \cite{elliptic}, $C$ is never
	ambient-balanced.
	\end{rem}
\begin{rem}
	The above method of constructing curves yields rigid nodal lci curves of any genus  on $X_0$
	for any $n$. However for $n>4, g>1$ these curves have $H^1(N)\neq 0$, so it's not clear these
curves	smooth out with $X_0$.
	 
	\end{rem}
\begin{proof}[Proof of Corollary \ref{bundle-cor}] 
	The proof is based on the Serre construction
	(see \cite{okonek}, \S I.5.1 which, though formulated for projective spaces is actually
	mostly valid for arbitrary smooth varieties, as already noted in \cite{okonek}, \S I.5.3).
	If $C$ is an elliptic curve as in  Theorem 1, Part (ii),
	then the computations in \cite{okonek}, \S I.5.1 show that $\mathcal{E}xt^1(\I_C, \O_X)=\O_C$
	and there is an exact sequence
	\[H^1(\O_X)\to\mathrm{Ext}^1(\I_C, \O_X)\to H^0(\mathcal{E}xt^1(\I_C, \O_X))\to H^2(\O_X).\]
	Since the extreme groups clearly vanish, 
	the element $1\in H^0(\O_C)$ yields 
	a uniquely determined sheaf $E$ as an extension of $\I_C$ by $\O_X$, i.e. one has exact
	\eqspl{bundle}{ 
	\exseq{\O_X}{E}{\I_C}.
	}
	Then the computations
	in loc. cit. show that $E$ is locally free with $c_1(E)=0, c_2(E)=[C]=e\lambda$ (the inequality $e\geq 49$
	 comes from \eqref{e-ineq}).\par
	Now an easy diagram chase around the exact sequence
	\[\exseq{\O_X}{E}{\I_C}\]
	shows that $h^0(E)=1, h^1(E)=0$, so the unique section up to scalars of $E$ extends
	to (infinitesimal) deformations, therefore
	$C$ deforms with deformations of $E$. 
	Conversely the functoriality of the Serre construction shows that deformations of $C$
	induce deformations of $E$. Thus, there is an isomorphism between the deformation
	functors of $C$ and $E$.
	Therefore since $C$ is rigid, so is $E$. Finally since $Pic(X)$ is generated
	by the hyperplane class,
	indecomposability follows from the Chern classes while (proper) semistability follows
	from the above exact sequence.
\end{proof}
%
\begin{proof}[Proof of Theorem \ref{gt-thm}]
	For the proof we use a standard quasi-cone degeneration where $C\subset X\subset\P^n$ degenerates to 
	\[C_0=C_1\subset X_1\subset X_0=X_1\cup_Z X_2\subset P_0=P_1\cup P_2\]
	where: $X_1\subset P_1$ a resolved quasi-cone of degree $n+2$, i.e. 
	the birational transform of a hypersurface of degree $n+2$ with a point of multiplicity $n+1$,
	also realizable as  $B_Y\P^{n-1}$, the blowup of 
	$\P^{n-1}$ in $Y$ which is a codimension-2 complete intersection
	of general hypersurfaces  $F_{n+1}$,  $F_{n+2}$ of the indicated degrees;
	$X_2\subset P_2=\P^n$ a degree-$(n+1)$ hypersurface;   $Z=X_1\cap X_2\simeq F_{n+1}$ 
	coincides with the birational
	transform of $F_{n+1}$.
	For $C_1\subset X_1$ we take a curve constructed as follows. Start with a general rational curve 
	$C'_1$ of degree $e$ in $\P^{n-1}$, i.e. a rational  normal curve in a general $\P^e\subset\P^{n-1}$,
	let $F_{n+1}$ be a general hypersurface and then let $F_{n+2}$ be a general
	hypersurface through $C'_1\cap F_{n+1}$. Then take for $C_1$ the birational transform
	of $C'_1$ in the blowup of $Y=F_{n+1}\cap F_{n+2}$.  
	Because $C'_1\cap Y=C'_1\cap F_{n+1}$, we have $C_1\cap Z=\emptyset$. 
	Because $N_{C'_1/\P^e}$ is balanced, i.e. $N_{C'_1/\P^e}=(e-1)\O(e+2)$,
	it follows that
	\[N_{C'_1/\P^{n-1}}=(e-1)\O(e+2)\oplus(n-e)\O(e).\]
	Then because the tangent spaces to $Y$ at $Y\cap C'_1$ can be specified
	arbitrarily (compare \cite{caudatenormal}, proof of Theorem 20), 
	it follows that $N_{C_1/X_1}$ is a general, locally corank-1 down modification
	of $N_{C'_1/\P^{n-1}}$ at $e(n+1)$ points, it follows that
	\[N_{C_1/X_1}=(n-e-4)\O\oplus(e+2)\O(-1).\]
	Because $H^1(N_{C_1/X_1})=H^1(N_{C_1/X_0})=0$ as $C_1$ is disjoint from $Z$,
	it follows that $C_1$ deforms along as $X_0$ smooths to a genral degree-$(n+2)$
	hypersurface in $\P^n$.
	\end{proof}
	\begin{rem}If $X_d\subset\P^n$ is a hypersurface of degree $d$ and $C\to X_d$
	is a curve of degree $e$ and genus $g$ with normal bundle $N$, then 
	$\chi(N)=(n+1-d)e+n-4-g(n-2)$. Thus if $d>n+1$ and $e>\frac{n-4}{d-n-1}$
	(any $g\geq 0$), then $\chi(N)<0$
	and in particular $H^1(N)\neq 0$. Indeed one does not expect such a  curve (e.g. rational curve)
	to exist on a \emph{general} $X_d$. In \cite{relreg} however one can find some constructions
	for 'well-behaved' families of \emph{special} hypersurfaces $X_d$,
	codimension $h^1(N)=-\chi(N)$
		in the family of all hypersurfaces,  endowed with 'good' curves.
\par
	Notations as above, the above argument in the proof of Theorem  \ref{gt-thm},
	applied to a rational curve $C_0$ of degree $e>n-4$, yields a curve
	$C_1$ whose normal bundle $N_{C_1/X_1}$ has $\O(-2)$ summands, hence has
	$H^1\neq 0$, so it's not clear if $C_1$ deforms along as $X_0$ deforms to a general $X_d$.
	\par
On the other hand whenever $d\geq n+2$ the same argument, using a degree-$d$ quasi-cone degeneration,
 does produce balanced rational curves
	of degree $e\leq (n-4)/(d-n-1)$. In view of Lemma \ref{line-lem}, 
	this is interesting when we can take $e\geq 3$ which means  $3\leq (n-4)/(d-n-1)$ i.e.
	$n+2\leq d\leq (4n-1)/3$, $n\geq 7$. For example, taking $n=10, d=13$, we get a balanced twisted cubic on
	a general degree-13 hypersurface 
	$X_{13}\subset\P^{10}$.
	\end{rem}

\bibliographystyle{amsplain}
\bibliography{../mybib}

\providecommand{\bysame}{\leavevmode\hbox to3em{\hrulefill}\thinspace}
\providecommand{\MR}{\relax\ifhmode\unskip\space\fi MR }
\providecommand{\MRhref}[2]{%
  \href{http://www.ams.org/mathscinet-getitem?mr=#1}{#2}
}
\providecommand{\href}[2]{#2}
\begin{thebibliography}{10}

\bibitem{clemens-quintic}
H.~Clemens, \emph{Homological equivalence, modulo algebraic equivalence, is not
  finitely generated.}, Inst. Hautes \'Etudes Sci. Publ. Math. \textbf{58}
  (1983), 19--38.

\bibitem{clemens-kley}
H.~Clemens and H.~Kley, \emph{Counting curves that move with threefolds}, J.
  Algebraic Geometry \textbf{9} (2000), 175--200.

\bibitem{ekedahl1}
T.~Ekedahl, T.~Johnsen, and D.~E. Sommervoll, \emph{Isolated rational curves on
  {K}3-fibered {Calabi-Yau} threefolds}, Manuscripta Math. \textbf{99} (1999),
  111--133, arxiv:alg-geometry/09710010.

\bibitem{katz-5tic}
S.~Katz, \emph{On the finiteness of rational curves on the quintic 3-fold},
  Compositio Math. \textbf{60} (1986), 151--162.

\bibitem{kley}
H.~P. Kley, \emph{Rigid curves in complete intersection {Calabi-Yau}
  threefolds}, Compositio Math. \textbf{123} (2000), 185--208.

\bibitem{knutsen}
A.~L. Knutsen, \emph{On isolated smooth curves of low genus in {Calabi-Yau}
  complete intersection threefolds}, Trans. Amer. Math. Soc. \textbf{364}
  (2012), 5243--5264.

\bibitem{lin-wu-yau}
H.~Lin, B.~Wu, and S.~T. Yau, \emph{Heterotic string compactification and new
  vector bundles},
  \url{http://archive.ymsc.tsinghua.edu.cn/pacm_download/230/8517}.

\bibitem{okonek}
C.~Okonek, M.~Schneider, and H.~Spindler, \emph{Vector bundles on projective
  spaces}, Birkhauser, 1989.

\bibitem{caudatenormal}
Z.~Ran, \emph{Balanced curves and minimal rational connectedness on {F}ano
  hypersurfaces}, Internat. Math. Research Notices (2022),
  arxiv.math:2008.01235.

\bibitem{relreg}
\bysame, \emph{Rigid and stably balanced curves on {C}alabi-{Y}au and
  general-type hypersurfaces}, arxiv.org/2209.05410. (2022).

\bibitem{elliptic}
\bysame, \emph{Interpolation of curves on {F}ano hypersurfces}, Communications
  in Comtemporary Math. (2023), arxiv.math 2201.09793.

\bibitem{thomas-obsbundle}
R.~P. Thomas, \emph{An obstructed bundle on a {Calabi-Yau} 3-fold}, Adv. Theor.
  Math. Physics \textbf{3} (1999), 567--576, \url{
  https://arxiv.org/abs/math/9903034}.

\bibitem{thomas-vbcy}
\bysame, \emph{A holomorphic {C}asson invariant for {Calabi-Yau} 3-folds, and
  bundles on {K3} fibrations}, J. Diff. Geom. \textbf{54} (2000), no.~2,
  367--438, \url{https://arxiv.org/pdf/math/9806111.pdf}.

\bibitem{voisin-e}
C.~Voisin, \emph{On a conjecture of {C}lemens on rational curves on
  hypersurfaces- {E}rratum}, J. Differential Geometry \textbf{49} (1998),
  601--611.

\bibitem{witten1}
E.~Witten, \emph{Holomorphic curves on manifolds of $su(3)$ holonomy},
  Mathematical Aspects of String Theory, Proc. Conf. UCSD 1986 (S.~T. Yau,
  ed.), Advanced series in mathematical physics, vol.~1, 1987, pp.~145--149.

\bibitem{yu-cicy}
Xun Yu, \emph{On smooth and isolated curves in general complete intersection
  {Calabi-Yau} threefolds}, Arxiv (2012), axiv:1208.6282.

\bibitem{yu-5tic}
\bysame, \emph{On {C}astenuovo theory and non-existence of smooth isolated
  curves in quintic threefolds}, Osaka J. Math. \textbf{53} (2016), no.~4,
  911--918, arxiv:1302.5410.

\bibitem{zahariuc}
A.~Zahariuc, \emph{Deformation of quintic threefolds to the chordal variety},
  Trans. Amer. Math. Soc. \textbf{370} (2018), 6493--6513, arXiv.math:
  1511.04043 {[math.AG]}.

\end{thebibliography}
\end{document}